\documentclass[12pt, reqno]{amsart}
\usepackage{amsfonts}
\usepackage{bbm}
\usepackage{amscd,amsfonts}
\usepackage{amssymb, eucal, amsfonts, amsmath, xypic, latexsym}
\usepackage{pifont}
\usepackage{mathrsfs,color}
\usepackage{amsthm,indentfirst,bm,fancyhdr,dsfont}
\usepackage{graphicx}
\usepackage[all]{xy}
\usepackage[CJKbookmarks=true]{hyperref}

\usepackage{mathrsfs}
\usepackage{amsmath}
\usepackage{amssymb}
\usepackage{hyperref}

\newtheorem{theorem}{Theorem}[section]

\newtheorem{prop}[theorem]{Proposition}

\theoremstyle{definition}

\newtheorem{remark}[theorem]{Remark}

\numberwithin{equation}{section}

\def\ggg{\mathfrak{g}}

\def\sl{\mathfrak{sl}}
\def\p{\mathfrak{p}}

\def\ggg{\mathfrak{g}}

\def\hhh{\mathfrak{h}}

\def\bbb{\mathfrak{b}}

\def\nnn{\mathfrak{n}}

\def\bbf{\mathbb{F}}

\def\K{\mathbb{K}}

\def\0{\bar{0}}
\def\1{\bar{1}}

\def\GL{\text{GL}}

{\vskip-\lastskip\medskip
  \noindent
  {\em #1.}\enspace
  }%
{\qed\par\medskip
  }
\begin{document}

\title[The representations of the Lie superalgebra $\p(3)$ in characteristic 3]
{The representations of the Lie superalgebra $\p(3)$ in characteristic 3}
\author{Ye Ren}
\address{School of Mathematics, China University of Mining and Technology, Jiangsu 221116, China}\email{TBH560@cumt.edu.cn}

\subjclass[2010]{20G05; 17B20;17B45; 17B50}
 \keywords{restricted Lie superalgebras, irreducible modules, p-characters, typical weights.}

\begin{abstract} Let $\ggg$ be the Lie superalgebra $\p(3)$ of rank 2 over an algebraically closed field $\K$ of characteristic $p=3$. We classify all irreducible modules of $\ggg$, and give the character formulae for irreducible modules.
\end{abstract}

\maketitle
\setcounter{tocdepth}{1}
\section*{Introduction}

In the 1970s, Kac classified the finite-dimensional simple Lie superalgebras over the field of complex numbers (cf. \cite{K77}). Since then, the representation theory of Lie superalgebras has attracted increasing attention (cf. \cite{Du16, PS18, Sh23, WZ09, WZ11, Z24}, etc.). However, their irreducible modules have not been fully understood. This paper classifies all the irreducible modules of $\p(3)$ in characteristic $p=3$, which is a sequel to \cite{R26}, where all the irreducible modules of $\p(3)$ in characteristic $p > 3$ were classified. The method for classifying the irreducible modules of $\p(3)$ in characteristic $3$ is derived from \cite{Y13, Z09}. It is highly dependent on the representations of the even part of $\p(3)$, that is, the structure of the irreducible modules of the special linear Lie algebra $\sl(3)$. The structure of the irreducible modules of $\sl(3)$ in characteristic $3$ is given in \cite{XW17}, and they are more complex than those of $\sl(3)$ in characteristic $p > 3$.

This article is divided into two parts. The first part gives the general setup for the restricted Lie superalgebra $\p(3)$. The second part gives the multiplicities of simple $\p(3)$-modules in the Kac modules $K_{\chi}(\lambda)$ $(\lambda\in \Lambda_{\chi})$. Up to isomorphism of the Kac modules $K_{\chi}(\lambda)$, there are six different types of $p$-characters. We provide specific calculations and proofs for four $p$-characters. Due to the similarities in their calculations and proofs, we omit the proofs for the other two.

\section{Preliminaries}
\subsection{}
Let $\K$ be an algebraically closed field of characteristic $p=3$. In this paper, it is the base field of the theory.
The standard matrix realization of the Lie superalgebra $\ggg=\p(3)=\ggg_{\0}\oplus\ggg_{\1}$ is given as follows:
\[\p(3)=\left\{X=\begin{pmatrix}
A & B \\
C & D \\
\end{pmatrix}|A=-D^t, B^t=B, C^t=-C,  \text{tr}(A)=\text{tr}(D)=0\right\}
.\]
Here, $A, B, C, D$ are $3\times 3$ matrices. The Lie superalgebra $\p(3)$ is a restricted Lie superalgebra (cf. \cite{WZ09}).
Denote by $U(\ggg)$ the universal enveloping algebra of $\ggg$. There is a $[p]$-map such that $x^p-x^{[p]}$ are in the center of $U(\ggg)$ for all $x\in\ggg_{\0}$. For every irreducible $\ggg$-module $M$, there is a $p$-character $\chi$ such that $$(x^p-x^{[p]})_{|M}=\chi(x)^p\text{id}_{|M}.$$ Let $I_{\chi}$ be the ideal of $U(\ggg)$ generated by $x^p-x^{[p]}-\chi(x)^p, x\in\ggg_{\0}$. We call $U_{\chi}(\ggg)=U(\ggg)/I_{\chi}$ the reduced enveloping algebra of $\ggg$.

Let $E_{i,j}$ be the matrix of order $6\times 6$ where the entry in the $i$-th row and $j$-th column equals $1$, with all other entries zero, $1\leq i,j\leq 6$. Let $\epsilon_i$ be the linear map defined by $\epsilon_i(\sum\limits_{k=1}^3a_kE_{k,k})=a_i, a_i\in \K$. Let
$$H_{\epsilon_i-\epsilon_j}= E_{i,i}-E_{j,j}-E_{3+i,3+i}+E_{3+j,3+j}, 1\leq i<j\leq 3,$$
$$X_{\epsilon_i-\epsilon_j}= E_{i,j}-E_{3+j,3+i}, 1\leq i,j\leq 3, i\neq j,$$
$$X_{-\epsilon_i-\epsilon_j}=E_{3+j,i}-E_{3+i,j}, 1\leq i< j\leq 3,$$
$$X_{\epsilon_i+\epsilon_j}= E_{i,3+j}+E_{j,3+i}, 1\leq i< j\leq 3.$$
$$X_{2\epsilon_i}= E_{i,3+i}, 1\leq i\leq 3.$$
Let $\hhh$ be the Cardan subalgebra of $\ggg$ spanned by $H_{\epsilon_i-\epsilon_{i+1}}(1\leq i\leq 2)$. With respect to $\hhh$, there is a standard triangular decomposition of $\ggg=\nnn^-\oplus\hhh\oplus\nnn$. Let $\ggg_{-1}$ be the subalgebra generated by $X_{-\epsilon_i-\epsilon_j}(1\leq i< j\leq 3)$ and
$\ggg_{+1}$ be the subalgebra generated by $X_{\epsilon_i+\epsilon_j}(1\leq i\leq j\leq 3)$. Let $\nnn^-=\nnn_0^-\oplus\ggg_{-1}$, $\nnn=\nnn_0\oplus\ggg_{+1}$.
So $\ggg_{\0}=\nnn_0^-\oplus\hhh\oplus\nnn_0$, and it is isomorphic to the special linear Lie algebra $\mathfrak{sl}(3)$. Let $\bbb_0=\nnn_0\oplus\hhh$, $\bbb=\nnn\oplus\hhh$.
Denote by $\Phi$ the root system. Let $\Phi^+$ $(\Phi^-)$ be the positive (negative) root system. Let $\Phi_0$ consist of even roots.
Then
$$\ggg=\hhh\oplus\bigoplus\limits_{\alpha\in\Phi}\ggg_{\alpha}$$
is the root space decomposition of $\ggg$.
Denote by $\rho_0$ the half sum of the roots of $\nnn_0$. Denote by $W$ the Weyl group of $\Phi_0$. Let $s_{\alpha}\in W$ be the reflection corresponding to $\alpha\in \Phi_0$. For $\lambda\in\hhh^*$, the dot action $s_{\alpha}\cdot\lambda=s_{\alpha}(\lambda+\rho_0)-\rho_0$.

Since $\hhh$ is commutative, for any $\lambda\in\hhh^*$, there is a one-dimensional $U_{\chi}(\hhh)$-module $\K_{\lambda}$. For any $\chi$, define
$$\Lambda_{\chi}=\{\lambda\in\hhh^*| \lambda(h)^p-\lambda(h^{[p]})=\chi^p(h),\forall h\in\hhh\}.$$
Write $\lambda=(r, s)\in \Lambda_{\chi}$ if $\lambda(H_{\epsilon_1-\epsilon_2})=r, \lambda(H_{\epsilon_2-\epsilon_3})=s$. If $\chi=0$, then $r,s\in\bbf_{p}$.

For $\chi\in \ggg_{\0}^*$, if $\gamma\in Aut_p(\ggg_{\0})$ (the group of automorphisms of $\ggg_{\0}$ as a restricted Lie algebra), then
$U_{\chi}(\ggg)\cong U_{\gamma\cdot\chi}(\ggg)$. Note that $\ggg_{\0}\cong\mathfrak{sl}(3)$, and there is a natural surjective homomorphism $\phi: \mathfrak{gl}(3)^*\rightarrow \ggg_{\0}^*$. So we only need to consider the $\GL(3)$-orbit of $\chi\in\ggg_0^*$.
There is a $\GL(3)$-invariant non-degenerate bilinear form on $\mathfrak{gl}(3)$  induced by trace, so $\mathfrak{gl}(3)\cong\mathfrak{gl}(3)^*$.  For each $\chi\in\ggg_{\0}^*$, there exists $g\in\GL(3)$ with $g\cdot\chi$ corresponds to a Jordan matrix. Then we can assume $\chi(\nnn_0)=0$. The $U_{\chi}(\hhh)$-module $\K_{\lambda}$ naturally becomes the $U_{\chi}(\bbb_0)$-module with trivial $\nnn_0$ action. Define the induced $U_{\chi}(\ggg_0)$-module£º
$$Z_{\chi}^0(\lambda)=\text{Ind}_{\bbb_0}^{\ggg_0}(\K_{\lambda}).$$
According to \cite[10.2]{J98}, $Z_{\chi}^0(\lambda)$ has a unique maximal submodule. Denote by $L_{\chi}^0(\lambda)$ the simple quotient of $Z_{\chi}^0(\lambda)$.
Let $v$ be a maximal vector of $Z_{\chi}^0(\lambda)$. Then
$$\{X_{-\epsilon_1+\epsilon_3}^cX_{-\epsilon_2+\epsilon_3}^bX_{-\epsilon_1+\epsilon_2}^av|0\leq a,b,c\leq 2\}$$
form a basis of $Z_{\chi}^0(\lambda)$. We still use $X_{-\epsilon_1+\epsilon_3}^cX_{-\epsilon_2+\epsilon_3}^bX_{-\epsilon_1+\epsilon_2}^av$ to represent elements of $L_{\chi}^0(\lambda)$.
Let $\ggg_{+1}$ act trivially on the simple $U_{\chi}(\ggg_{\0})$-module $L_{\chi}^0(\lambda)$. Then $L_{\chi}^0(\lambda)$ becomes a $U_{\chi}(\ggg_{\0}\oplus\ggg_{+1})$-module. Define the Kac module£º
$$K_{\chi}(\lambda)=\text{Ind}_{\ggg_{\0}\oplus\ggg_{+1}}^{\ggg}(L_{\chi}^0(\lambda))\cong\wedge(\ggg_{-1})\otimes L_{\chi}^0(\lambda).$$


According to \cite[Proposition 1.4]{R26}, there is a $\chi$ in its $\GL(3)$-orbit such that $K_{\chi}(\lambda)$ has a unique maximal submodule. Denote by $L_{\chi}(\lambda)$ the simple quotient of $K_{\chi}(\lambda)$.
For $\lambda=(r, s)\in\Lambda_{\chi}$, set $$\delta(\lambda)=rs(r+s+1)\in\K.$$
If $\delta(\lambda)\neq 0$, then $\lambda$ is called a typical weight. Otherwise, it is called an atypical weight. We have the following proposition:
\begin{prop}(See \cite[Theorem 5.5]{Z24})\label{delta}
Suppose $\delta(\lambda)\neq 0$. Then $K_{\chi}(\lambda)$ is an irreducible $U_{\chi}(\ggg)$-module.
\end{prop}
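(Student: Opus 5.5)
The plan is the standard ``top wedge'' argument for Kac modules over type $\p$. Set $\ggg_{-1}$ as above; it is purely odd and abelian with basis $y_{ij}=X_{-\epsilon_i-\epsilon_j}$, $1\leq i<j\leq 3$, so $\wedge(\ggg_{-1})$ has dimension $8$. Let $y=y_{12}y_{13}y_{23}\in\wedge^3(\ggg_{-1})$ be the top degree element. Write $K=K_{\chi}(\lambda)\cong\wedge(\ggg_{-1})\otimes L_{\chi}^0(\lambda)$. I would establish two facts. The first is that every nonzero $\ggg$-submodule $M\subseteq K$ contains $y\otimes L_{\chi}^0(\lambda)$. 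The second is that, when $\delta(\lambda)\neq 0$, the vector $y\otimes v$ (with $v$ a generator of $L_{\chi}^0(\lambda)$) generates all of $K$. Together these give irreducibility.

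\textbf{Fact one.} Take $0\neq m\in M$ and write $m=\sum_I y_I\otimes m_I$, graded by wedge degree. Choose a term of minimal degree. Multiplying by suitable complementary $y_{ij}$ (these act by left multiplication, since $\ggg_{-1}$ is abelian and odd), we land in $y\otimes L_{\chi}^0(\lambda)$ with nonzero component. So $M\cap (y\otimes L_{\chi}^0(\lambda))\neq 0$. Next, $y\otimes L_{\chi}^0(\lambda)$ is a $\ggg_{\0}$-submodule. Indeed, $\ggg_{\0}$ acts on $\wedge^3\ggg_{-1}$ by the trace-type character, which vanishes on $\sl(3)$. Hence $y\otimes L_{\chi}^0(\lambda)\cong L_{\chi}^0(\lambda)$ up to a one-dimensional twist, and it is simple. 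Therefore $M\supseteq y\otimes L_{\chi}^0(\lambda)$.

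\textbf{Fact two.} This is the main computation. It suffices to show that $1\otimes v$ lies in $U_{\chi}(\ggg)(y\otimes v)$, since $1\otimes v$ generates $K$. I would take $v$ to be the image of the maximal vector, with $\nnn_0 v=0$, and apply the product $x=X_{2\epsilon_1}\cdots$ of suitable elements of $\ggg_{+1}$ of total weight $2\rho$-type. The natural choice is $X_{\epsilon_1+\epsilon_2}X_{\epsilon_1+\epsilon_3}X_{\epsilon_2+\epsilon_3}$, possibly also with $X_{2\epsilon_i}$ terms. Then $x(y\otimes v)=1\otimes D v$ for some $D\in U(\ggg_{\0})$. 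One commutes each $X_{\epsilon_i+\epsilon_j}$ past the $y_{kl}$ using $[X_{\epsilon_i+\epsilon_j},y_{kl}]\in\ggg_{\0}$ and the fact that $\ggg_{+1}$ kills $1\otimes L_{\chi}^0(\lambda)$. Projecting to the weight-$\lambda$ component, $D v=c(\lambda)v+(\text{terms in }\nnn_0^-\text{-images})$. The key claim is that $c(\lambda)$ is a nonzero scalar multiple of $\delta(\lambda)=rs(r+s+1)$, with the factors arising as $\lambda(H_{\epsilon_1-\epsilon_2})$, $\lambda(H_{\epsilon_2-\epsilon_3})$ and $\lambda(H_{\epsilon_1-\epsilon_3})+1$ respectively. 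More robustly, I would argue with the whole space $y\otimes L_{\chi}^0(\lambda)$ and the $\ggg_{\0}$-equivariant composite $L_{\chi}^0(\lambda)\to y\otimes L\to 1\otimes L$. By Schur's lemma this map is a scalar, namely $c(\lambda)$. Computing it on the highest weight vector, it is nonzero exactly when $\delta(\lambda)\neq0$.

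\textbf{Main obstacle.} The hard step is the explicit commutator bookkeeping that produces $c(\lambda)=\pm\delta(\lambda)$, together with checking that the characteristic $3$ coefficients (the factors of $2$ from $X_{2\epsilon_i}$) do not vanish. I would organize it by sorting the terms in weight order. The $X_{2\epsilon_i}$ contributions do not affect the scalar here, which is why the product of the three $X_{\epsilon_i+\epsilon_j}$ suffices. Once this is done, $K=U_{\chi}(\ggg)(1\otimes v)\subseteq M$, so $K$ is irreducible. This is exactly \cite[Theorem 5.5]{Z24}, which we may also cite directly.
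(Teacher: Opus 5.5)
Your route is genuinely different from the paper's: the paper gives no argument and simply cites \cite[Theorem 5.5]{Z24}. Your outline is a correct, self-contained proof once the decisive computation is actually carried out; it is asserted in your write-up, not done.

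Fact one is the standard socle argument. It works because $\ggg_{\0}=\sl(3)$ acts trivially on $\wedge^3\ggg_{-1}$, so $y\otimes L^0_\chi(\lambda)\cong L^0_\chi(\lambda)$ is $\ggg_{\0}$-simple.

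Fact two does close as you predict. Write $v$ for $1\otimes v$, $\lambda=(r,s)$, and $e_{ab}=E_{a,b}-E_{3+b,3+a}$. Use only
$$[X_{\epsilon_i+\epsilon_j},X_{-\epsilon_k-\epsilon_l}]=\delta_{jl}e_{ik}-\delta_{jk}e_{il}+\delta_{il}e_{jk}-\delta_{ik}e_{jl},$$
together with $\nnn_0v=0$ and the fact that $\hhh$ acts on $v$ by $\lambda$. One finds exactly, with no extra terms,
$$X_{\epsilon_1+\epsilon_2}X_{\epsilon_1+\epsilon_3}X_{\epsilon_2+\epsilon_3}\,X_{-\epsilon_1-\epsilon_2}X_{-\epsilon_1-\epsilon_3}X_{-\epsilon_2-\epsilon_3}\,v=-rs(r+s+1)\,v=-\delta(\lambda)\,v .$$
No $X_{2\epsilon_i}$ is needed, and nothing special happens in characteristic $3$. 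What your route buys over the citation is an explicit, characteristic-free derivation of $\delta$, valid for every $\chi$ with $\chi(\nnn_0)=0$. It gives sufficiency only: atypicality does not force reducibility, as the paper's $\chi_4$ case shows.

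Two statements in your write-up should be dropped or corrected.

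\emph{The Schur's-lemma fallback is false.} The map $w\mapsto x(y\otimes w)$ is not $\ggg_{\0}$-equivariant, because $x=X_{\epsilon_1+\epsilon_2}X_{\epsilon_1+\epsilon_3}X_{\epsilon_2+\epsilon_3}$ is not $\ggg_{\0}$-invariant. For instance,
$$[X_{\epsilon_1-\epsilon_2},x]=2X_{2\epsilon_1}X_{\epsilon_1+\epsilon_3}X_{\epsilon_2+\epsilon_3}\neq 0 .$$
So no scalar can be extracted that way.

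\emph{``Projecting to the weight-$\lambda$ component'' is not meaningful here.} In characteristic $3$ the $\hhh$-weight spaces of $L^0_\chi(\lambda)$, including the $\lambda$-weight space, can have dimension greater than one; the paper's proofs work with three-dimensional ones. Moreover, for $\chi_5$ or $\chi_6$ the vector $v$ itself lies in $\nnn_0^-L^0_\chi(\lambda)$, since $X_{-\alpha}^3$ acts by $\chi(X_{-\alpha})^3$.

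The correct reason the result is an exact multiple of $v$ is a torus-grading argument. The element $xy$ has weight $0$ for the full diagonal torus of $\GL(3)$. Hence its $U(\ggg_{\0})$-component in the PBW decomposition $\wedge(\ggg_{-1})\otimes U(\ggg_{\0})\otimes\wedge(\ggg_{+1})$ lies in $U(\hhh)\oplus U(\ggg_{\0})\nnn_0$, and so acts on $v$ by a scalar. The remaining terms lie in $U(\ggg)\ggg_{+1}$ and kill $v$.
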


\section{The representations of $\p(3)$}
\subsection{}
We will investigate the character formula of the irreducible $\ggg$-module $L_{\chi}(\lambda)$, $\lambda\in\Lambda_{\chi}$. There are $p^2$  possibilities for $\lambda$. We only need to consider the $\GL(3)$-orbit of $\chi\in\ggg_0^*$. The representatives of orbits are given below (see \cite{XW17}):
\begin{itemize}
\item[(1)] $\chi_1$ is regular semisimple, i.e. $\chi_1(H_{\epsilon_1-\epsilon_2} )\chi_1(H_{\epsilon_2-\epsilon_3})\chi_1(H_{\epsilon_1-\epsilon_3})\neq 0$, $\chi_1(\nnn_0\oplus\nnn_0^-)=0$.

\item[(2)] $\chi_2$ is subregular semisimple, i.e. $\chi_2(H_{\epsilon_1-\epsilon_2})=0$, $\chi_2(H_{\epsilon_2-\epsilon_3})\chi_2(H_{\epsilon_1-\epsilon_3})\neq 0$, $\chi_2(\nnn_0\oplus\nnn_0^-)=0$.

\item[(3)] $\chi_3=0$.

\item[(4)] $\chi_4(H_{\epsilon_2-\epsilon_3})\neq 0$, $\chi_4(X_{-\epsilon_1+\epsilon_2})=1$, $\chi_4(\nnn_0)=\chi_4(H_{\epsilon_1-\epsilon_2})=\chi_4(X_{-\epsilon_1+\epsilon_3})=\chi_4(X_{-\epsilon_2+\epsilon_3})=0$.

\item[(5)] $\chi_5$ is subregular nilpotent, i.e. $\chi_5(H_{\epsilon_1-\epsilon_2})=\chi_5(H_{\epsilon_2-\epsilon_3})=0$, $\chi_5(X_{-\epsilon_2+\epsilon_3})=1$, $\chi_5(\nnn_0)=0=\chi_5(X_{-\epsilon_1+\epsilon_2})=\chi_5(X_{-\epsilon_1+\epsilon_3})=0$.

\item[(6)] $\chi_6$ is regular nilpotent, i.e. $\chi_6(X_{-\epsilon_1+\epsilon_2} )=\chi_6(X_{-\epsilon_2+\epsilon_3})=1$, $\chi_6(\hhh\oplus\nnn_0)=0$.
\end{itemize}

The character formula of $K_{\chi}(\lambda)$ is clear. So to investigate the character formula of $L_{\chi}(\lambda)$, we only need to investigate the composition factors of $K_{\chi}(\lambda)$. A way to investigate the composition factors of $K_{\chi}(\lambda)$ is to find all the maximal vectors of $K_{\chi}(\lambda)$.
For any vector $v\in K_{\chi}(\lambda)$, $$v=u_1v_1+u_2v_2+u_3v_3,$$
$v_i\in L_{\chi}^0(\lambda), u_i\in \wedge^i(\ggg_{-1}),i=1,2,3.$
According to \cite[Proposition 2.2]{R26},
the vector $v$ is a maximal vector if and only if $u_1v_1,u_2v_2,u_3v_3$ are maximal vectors.
So, we only need to find the homogeneous maximal vectors.
The following three propositions are obtained by direct computation.
\begin{prop}\label{m1}
Let
$m_1=X_{-\epsilon_1-\epsilon_2}w_1+X_{-\epsilon_1-\epsilon_3}w_2+X_{-\epsilon_2-\epsilon_3}w_3\in K_{\chi}(\lambda)$, $w_1$, $w_2$, $w_3\in L_{\chi}^0(\lambda).$ Then
$m_1$ is a maximal vector of weight $(\mu_1,\mu_2)$ if and only if
\begin{gather}
H_{\epsilon_1-\epsilon_2}w_1=\mu_1w_1, H_{\epsilon_1-\epsilon_2}w_2=(\mu_1+1)w_2, H_{\epsilon_1-\epsilon_2}w_3=(\mu_1-1)w_3;\label{2.1}\\
H_{\epsilon_2-\epsilon_3}w_1=(\mu_2+1)w_1, H_{\epsilon_2-\epsilon_3}w_2=(\mu_2-1)w_2, H_{\epsilon_1-\epsilon_2}w_3=\mu_2w_3;\label{2.2}\\
X_{-\epsilon_1+\epsilon_3}w_2+X_{-\epsilon_2+\epsilon_3}w_3=0;\label{2.3}\\
X_{\epsilon_1-\epsilon_2}w_1=X_{\epsilon_1-\epsilon_2}w_2=-w_2+X_{\epsilon_1-\epsilon_2}w_3=0;\label{2.4}\\
X_{\epsilon_2-\epsilon_3}w_1=-w_1+X_{\epsilon_2-\epsilon_3}w_2=X_{\epsilon_2-\epsilon_3}w_3=0.\label{2.5}
\end{gather}
\end{prop}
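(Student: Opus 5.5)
A vector $m_1\in K_{\chi}(\lambda)$ is maximal of weight $(\mu_1,\mu_2)$ when it is an $\hhh$-weight vector of that weight and is killed by $\nnn=\nnn_0\oplus\ggg_{+1}$. Since $\nnn_0$ is generated by $X_{\epsilon_1-\epsilon_2},X_{\epsilon_2-\epsilon_3}$, it suffices to check these two elements together with a generating set of $\ggg_{+1}$ as a $\ggg_{\0}$-module. The plan is to compute directly in $K_{\chi}(\lambda)\cong\wedge(\ggg_{-1})\otimes L_{\chi}^0(\lambda)$. Here $X_{-\epsilon_i-\epsilon_j}$ are odd elements spanning the abelian subalgebra $\ggg_{-1}$. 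The vectors $X_{-\epsilon_1-\epsilon_2}\otimes L^0,\ X_{-\epsilon_1-\epsilon_3}\otimes L^0,\ X_{-\epsilon_2-\epsilon_3}\otimes L^0$ form a direct sum inside $\wedge^1(\ggg_{-1})\otimes L_{\chi}^0(\lambda)$. So every condition splits into independent equations on the coefficients $w_1,w_2,w_3$.

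\textbf{Weight conditions.} For $h\in\hhh$,
\[
h\,m_1=\sum [h,X_{-\epsilon_i-\epsilon_j}]w+X_{-\epsilon_i-\epsilon_j}hw .
\]
We need the values $(-\epsilon_i-\epsilon_j)(H_{\epsilon_1-\epsilon_2})$ and $(-\epsilon_i-\epsilon_j)(H_{\epsilon_2-\epsilon_3})$. These are read off the matrix realization: $X_{-\epsilon_i-\epsilon_j}$ sits in the lower-left block, where $H$ acts by $-A^t$ on the left and $A$ on the right. This gives the shifts $(0,-1)$, $(-1,1)$, $(1,0)$ for $(ij)=(12),(13),(23)$. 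Requiring $hm_1=\mu(h)m_1$ and comparing components yields \eqref{2.1} and \eqref{2.2}. (In \eqref{2.2} the last condition should read $H_{\epsilon_2-\epsilon_3}w_3=\mu_2w_3$.)

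\textbf{$\nnn_0$ conditions.} Since $X_{\epsilon_1-\epsilon_2}$ is even,
\[
X_{\epsilon_1-\epsilon_2}m_1=\sum [X_{\epsilon_1-\epsilon_2},X_{-\epsilon_i-\epsilon_j}]w+X_{-\epsilon_i-\epsilon_j}X_{\epsilon_1-\epsilon_2}w .
\]
The brackets are computed with $6\times6$ matrices. $[X_{\epsilon_1-\epsilon_2},X_{-\epsilon_1-\epsilon_2}]$ would have weight $-2\epsilon_2$, which is not a root of $\ggg_{-1}$, so it is $0$. $[X_{\epsilon_1-\epsilon_2},X_{-\epsilon_1-\epsilon_3}]=\pm X_{-\epsilon_2-\epsilon_3}$. $[X_{\epsilon_1-\epsilon_2},X_{-\epsilon_2-\epsilon_3}]$ has weight $\epsilon_1-2\epsilon_2-\epsilon_3$, which is not a root, so it is $0$. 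Collecting the coefficients of $X_{-\epsilon_1-\epsilon_2}$, $X_{-\epsilon_1-\epsilon_3}$, $X_{-\epsilon_2-\epsilon_3}$ gives \eqref{2.4}. The analogous computation for $X_{\epsilon_2-\epsilon_3}$, using $[X_{\epsilon_2-\epsilon_3},X_{-\epsilon_1-\epsilon_2}]=\pm X_{-\epsilon_1-\epsilon_3}$, gives \eqref{2.5}. The signs must be computed precisely to match the $-w_2$ and $-w_1$ terms; I would fix them by explicit multiplication of elementary matrices.

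\textbf{$\ggg_{+1}$ conditions.} $\ggg_{+1}$ is an irreducible $\ggg_{\0}$-module (it is $S^2$ of the standard module), so its lowest weight vector generates it. Once $\nnn_0m_1=0$, the condition $\ggg_{+1}m_1=0$ therefore reduces to $X_{2\epsilon_3}m_1=0$. Indeed, each other $X_{\epsilon_i+\epsilon_j}$ is an iterated bracket of $X_{2\epsilon_3}$ with $X_{\epsilon_1-\epsilon_2}$ and $X_{\epsilon_2-\epsilon_3}$, and
\[
[y,X]m_1=yXm_1\pm Xym_1 .
\]

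For odd $X\in\ggg_{+1}$ we have $X\cdot X_{-\epsilon_i-\epsilon_j}w=[X,X_{-\epsilon_i-\epsilon_j}]w$, because $\ggg_{+1}$ kills $L^0_{\chi}(\lambda)$. Now $[X_{2\epsilon_3},X_{-\epsilon_1-\epsilon_2}]=0$, since its weight is not a root. The other two brackets give $\pm X_{-\epsilon_1+\epsilon_3}$ and $\pm X_{-\epsilon_2+\epsilon_3}$. The result is the single equation \eqref{2.3}, with signs to be checked by matrix computation.

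Conversely, if \eqref{2.1}--\eqref{2.5} hold, the same computations show that $m_1$ is a weight vector killed by $X_{\epsilon_1-\epsilon_2}$, $X_{\epsilon_2-\epsilon_3}$ and $X_{2\epsilon_3}$, hence by all of $\nnn$. The main obstacle is bookkeeping: getting the supercommutator signs and structure constants right so that the equations come out exactly as stated (e.g.\ the relative sign in \eqref{2.3}). I would settle each by explicit $E_{i,j}$ multiplication.
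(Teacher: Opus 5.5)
Your proposal is correct and is essentially the paper's ``direct computation''. Multiplying out the matrices gives $[X_{\epsilon_1-\epsilon_2},X_{-\epsilon_1-\epsilon_3}]=-X_{-\epsilon_2-\epsilon_3}$, $[X_{\epsilon_2-\epsilon_3},X_{-\epsilon_1-\epsilon_2}]=-X_{-\epsilon_1-\epsilon_3}$ and $[X_{2\epsilon_3},X_{-\epsilon_i-\epsilon_3}]=X_{-\epsilon_i+\epsilon_3}$ for $i=1,2$, which yield exactly \eqref{2.3}--\eqref{2.5} with the stated signs. Your reduction of $\ggg_{+1}m_1=0$ to $X_{2\epsilon_3}m_1=0$ (its iterated brackets involve only the coefficients $1$ and $2$, both invertible for $p=3$) makes explicit why the paper can list only \eqref{2.3}.

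One caution: justify the vanishing brackets by the $\mathbb{Z}^3$-grading of the matrix units (or by direct multiplication), not by $\hhh$-weights. On $\hhh$ in characteristic $3$ one has $\epsilon_1+\epsilon_2+\epsilon_3=0$ and $3\epsilon_i=0$, so $\hhh$-weights alone cannot exclude, e.g., $[X_{\epsilon_1-\epsilon_2},X_{-\epsilon_1-\epsilon_2}]\in\K X_{-\epsilon_1-\epsilon_3}$ or $[X_{2\epsilon_3},X_{-\epsilon_1-\epsilon_2}]\in\hhh$.
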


\begin{prop}\label{m2}
Let $m_2=X_{-\epsilon_1-\epsilon_3}X_{-\epsilon_1-\epsilon_2}w_1+X_{-\epsilon_2-\epsilon_3}X_{-\epsilon_1-\epsilon_2}w_2 +X_{-\epsilon_1-\epsilon_3}X_{-\epsilon_2-\epsilon_3}$ $w_3\in K_{\chi}(\lambda)$, $w_1, w_2, w_3\in L_{\chi}^0(\lambda).$ Then
$m_2$ is a maximal vector of weight $(\mu_1,\mu_2)$ if and only if
\begin{gather}
H_{\epsilon_1-\epsilon_2}w_1=(\mu_1+1)w_1, H_{\epsilon_1-\epsilon_2}w_2=(\mu_1-1)w_2, H_{\epsilon_1-\epsilon_2}w_3=\mu_1w_3;\label{2.6}\\
H_{\epsilon_2-\epsilon_3}w_1=\mu_2w_1, H_{\epsilon_2-\epsilon_3}w_2=(\mu_2+1)w_2, H_{\epsilon_1-\epsilon_2}w_3=(\mu_2-1)w_3;\label{2.7}\\
X_{-\epsilon_2+\epsilon_3}w_2+X_{-\epsilon_1+\epsilon_3}w_1+w_3=X_{-\epsilon_1+\epsilon_3}w_3=
X_{-\epsilon_2+\epsilon_3}w_3=0;\label{2.8}\\
X_{\epsilon_1-\epsilon_2}w_1=X_{\epsilon_1-\epsilon_2}w_2-w_1=X_{\epsilon_1-\epsilon_2}w_3=0;\label{2.9}\\
X_{\epsilon_2-\epsilon_3}w_1=X_{\epsilon_2-\epsilon_3}w_2=w_2+X_{\epsilon_2-\epsilon_3}w_3=0.\label{2.10}
\end{gather}
\end{prop}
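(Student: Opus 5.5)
The plan is a direct computation in $K_{\chi}(\lambda)\cong\wedge(\ggg_{-1})\otimes L_{\chi}^0(\lambda)$, parallel to the one for Proposition \ref{m1}. A vector $m_2$ is maximal of weight $(\mu_1,\mu_2)$ if and only if three things hold: $H_{\epsilon_1-\epsilon_2}$ and $H_{\epsilon_2-\epsilon_3}$ act on it by $\mu_1,\mu_2$; it is killed by $X_{\epsilon_1-\epsilon_2}$ and $X_{\epsilon_2-\epsilon_3}$, which generate $\nnn_0$; and it is killed by $\ggg_{+1}$. Since $[\nnn_0,\ggg_{+1}]\subseteq\ggg_{+1}$ and $\ggg_{+1}$ is generated as an $\nnn_0$-module by its lowest root vectors, the $\ggg_{+1}$-condition reduces to checking a few odd generators. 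The key structural facts are the following.
\begin{itemize}
\item The monomials $X_{-\epsilon_1-\epsilon_3}X_{-\epsilon_1-\epsilon_2}$, $X_{-\epsilon_2-\epsilon_3}X_{-\epsilon_1-\epsilon_2}$ and $X_{-\epsilon_1-\epsilon_3}X_{-\epsilon_2-\epsilon_3}$ form a basis of $\wedge^2(\ggg_{-1})$.
\item $K_{\chi}(\lambda)=\wedge^2(\ggg_{-1})\otimes L_{\chi}^0(\lambda)$ in degree $2$ is free over this basis, so a vector in degree $2$ vanishes exactly when each of its three coefficients in $L_{\chi}^0(\lambda)$ vanishes.
\end{itemize}

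For the weight conditions \eqref{2.6} and \eqref{2.7}, I compute the weights of the three monomials from the root data: they are $-2\epsilon_1-\epsilon_2-\epsilon_3$, $-\epsilon_1-2\epsilon_2-\epsilon_3$ and $-\epsilon_1-\epsilon_2-2\epsilon_3$. Evaluating these on $H_{\epsilon_1-\epsilon_2}$ and $H_{\epsilon_2-\epsilon_3}$ (with $E_{3+i,3+i}$ contributing the sign) gives shifts of $(-1,0)$, $(1,-1)$ and $(0,1)$, which produce the stated eigenvalue equations on $w_1,w_2,w_3$. Here I note that in the third equations of \eqref{2.2} and \eqref{2.7} the operator should presumably read $H_{\epsilon_2-\epsilon_3}$, a typo.

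For the $\nnn_0$-conditions I apply $X_{\epsilon_1-\epsilon_2}$ and $X_{\epsilon_2-\epsilon_3}$ via $x(u\otimes w)=[x,u]\otimes w+u\otimes xw$. I compute $[X_{\epsilon_i-\epsilon_j},X_{-\epsilon_k-\epsilon_l}]$ from the matrix realization; the results are $0$ or $\pm X_{-\epsilon_{k'}-\epsilon_{l'}}$. For instance, $X_{\epsilon_1-\epsilon_2}$ sends $X_{-\epsilon_2-\epsilon_3}$ to $\pm X_{-\epsilon_1-\epsilon_3}$ and kills $X_{-\epsilon_1-\epsilon_2}$ and $X_{-\epsilon_1-\epsilon_3}$. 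I then re-express the products in the chosen basis using anticommutativity, keeping track of signs, and collect the coefficients of the three basis monomials. This yields \eqref{2.9} and \eqref{2.10}.

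For the $\ggg_{+1}$-condition I apply $X_{2\epsilon_i}$ and $X_{\epsilon_i+\epsilon_j}$. Moving each past the two odd factors uses $[X_{\epsilon_i+\epsilon_j},X_{-\epsilon_k-\epsilon_l}]\in\ggg_{\0}$, namely combinations of $H$'s and $X_{\pm(\epsilon_a-\epsilon_b)}$. Because $\ggg_{+1}$ acts trivially on $L_{\chi}^0(\lambda)$, one gets $x\,u_1u_2w=[x,u_1]u_2w-u_1[x,u_2]w$, after which the even element is commuted past the remaining odd factor. The outcome lies in degree $1$, and its coefficients give linear conditions on $w_1,w_2,w_3$ involving $X_{-\epsilon_1+\epsilon_3}$ and $X_{-\epsilon_2+\epsilon_3}$ (and possibly $H$-terms, which are absorbed via the weight equations). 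These are the conditions \eqref{2.8}. It should suffice to check this for the generators of $\ggg_{+1}$ as an $\nnn_0$-module, since annihilation by $\nnn_0$ then propagates.

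The main obstacle is this last step: the odd–odd brackets and the sign bookkeeping. Any missed sign changes \eqref{2.8}, for example the $+w_3$ term. As a cross-check I would verify consistency with \eqref{2.3} by symmetry, since the element $X_{-\epsilon_1-\epsilon_2}X_{-\epsilon_1-\epsilon_3}X_{-\epsilon_2-\epsilon_3}$ pairs degree-1 and degree-2 vectors.
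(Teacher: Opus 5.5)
Your strategy is the same as the paper's, which only says ``direct computation''. Your weight shifts and the typo remark about (\ref{2.7}) are correct. The problem is that the one bracket you actually write down is reversed, and it is exactly the bracket that produces (\ref{2.9}).

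From the matrices,
\[[X_{\epsilon_1-\epsilon_2},X_{-\epsilon_1-\epsilon_3}]=[E_{1,2}-E_{5,4},E_{6,1}-E_{4,3}]=E_{5,3}-E_{6,2}=-X_{-\epsilon_2-\epsilon_3},\]
while $X_{\epsilon_1-\epsilon_2}$ commutes with $X_{-\epsilon_1-\epsilon_2}$ and $X_{-\epsilon_2-\epsilon_3}$. Likewise $[X_{\epsilon_2-\epsilon_3},X_{-\epsilon_1-\epsilon_2}]=-X_{-\epsilon_1-\epsilon_3}$, and $X_{\epsilon_2-\epsilon_3}$ commutes with the other two. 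With your version ($X_{-\epsilon_2-\epsilon_3}\mapsto\pm X_{-\epsilon_1-\epsilon_3}$, the rest killed), extracting coefficients gives $X_{\epsilon_1-\epsilon_2}w_1\pm w_2=0$ and $X_{\epsilon_1-\epsilon_2}w_2=0$. That is not (\ref{2.9}); its two terms even lie in different $H_{\epsilon_1-\epsilon_2}$-weight spaces, namely $\mu_1$ and $\mu_1-1$. With the correct brackets and $X_{-\epsilon_2-\epsilon_3}^2=0$ one gets
\[X_{\epsilon_1-\epsilon_2}m_2=X_{-\epsilon_1-\epsilon_3}X_{-\epsilon_1-\epsilon_2}X_{\epsilon_1-\epsilon_2}w_1+X_{-\epsilon_2-\epsilon_3}X_{-\epsilon_1-\epsilon_2}(X_{\epsilon_1-\epsilon_2}w_2-w_1)+X_{-\epsilon_1-\epsilon_3}X_{-\epsilon_2-\epsilon_3}X_{\epsilon_1-\epsilon_2}w_3 .\]
The analogous computation for $X_{\epsilon_2-\epsilon_3}$ gives (\ref{2.10}). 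The term $w_2$ in its third coefficient comes from $-X_{-\epsilon_2-\epsilon_3}X_{-\epsilon_1-\epsilon_3}=X_{-\epsilon_1-\epsilon_3}X_{-\epsilon_2-\epsilon_3}$.

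For $\ggg_{+1}$, your reduction to a generator is valid, but it needs its justification. Once $\nnn_0m_2=0$, the annihilator of $m_2$ in $\ggg_{+1}$ is $\mathrm{ad}\,\nnn_0$-stable. Moreover, $\ggg_{+1}$ is generated by the single vector $X_{2\epsilon_3}$, because
\[[X_{\epsilon_2-\epsilon_3},X_{2\epsilon_3}]=X_{\epsilon_2+\epsilon_3},\quad [X_{\epsilon_1-\epsilon_2},X_{\epsilon_2+\epsilon_3}]=X_{\epsilon_1+\epsilon_3},\quad [X_{\epsilon_2-\epsilon_3},X_{\epsilon_1+\epsilon_3}]=X_{\epsilon_1+\epsilon_2},\]
\[[X_{\epsilon_2-\epsilon_3},X_{\epsilon_2+\epsilon_3}]=2X_{2\epsilon_2},\quad [X_{\epsilon_1-\epsilon_2},X_{\epsilon_1+\epsilon_2}]=2X_{2\epsilon_1},\]
and $2\neq 0$ for $p=3$. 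Next use
\[[X_{2\epsilon_3},X_{-\epsilon_1-\epsilon_3}]=X_{-\epsilon_1+\epsilon_3},\quad [X_{2\epsilon_3},X_{-\epsilon_2-\epsilon_3}]=X_{-\epsilon_2+\epsilon_3},\quad [X_{2\epsilon_3},X_{-\epsilon_1-\epsilon_2}]=0,\]
\[[X_{-\epsilon_1+\epsilon_3},X_{-\epsilon_2-\epsilon_3}]=X_{-\epsilon_1-\epsilon_2},\quad [X_{-\epsilon_1+\epsilon_3},X_{-\epsilon_1-\epsilon_2}]=[X_{-\epsilon_2+\epsilon_3},X_{-\epsilon_1-\epsilon_2}]=0 .\]
Your rule $x u_1u_2w=[x,u_1]u_2w-u_1[x,u_2]w$ then gives
\[X_{2\epsilon_3}m_2=X_{-\epsilon_1-\epsilon_2}(X_{-\epsilon_1+\epsilon_3}w_1+X_{-\epsilon_2+\epsilon_3}w_2+w_3)+X_{-\epsilon_2-\epsilon_3}X_{-\epsilon_1+\epsilon_3}w_3-X_{-\epsilon_1-\epsilon_3}X_{-\epsilon_2+\epsilon_3}w_3 .\]
This is exactly (\ref{2.8}), with no $H$-terms.

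So the proposition is true, and your method proves it once the $\nnn_0$-brackets are corrected. However, the proposition is nothing more than this computation. The proof therefore has to consist of these explicit formulas, not the assertion that the coefficients ``yield'' the stated conditions.
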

\begin{prop}\label{m3}
Let $m_3=X_{-\epsilon_1-\epsilon_3}X_{-\epsilon_2-\epsilon_3}X_{-\epsilon_1-\epsilon_2}w\in K_{\chi}(\lambda)$, $w\in L_{\chi}^0(\lambda).$ Then
$m_3$ is a maximal vector of weight $(\mu_1,\mu_2)$ if and only if
\begin{gather}
H_{\epsilon_1-\epsilon_2}w=\mu_1w, H_{\epsilon_2-\epsilon_3}w=\mu_2w;\label{2.11}\\
X_{-\epsilon_2+\epsilon_3}w=X_{-\epsilon_1+\epsilon_3}w=X_{\epsilon_1-\epsilon_2}w=X_{\epsilon_2-\epsilon_3}w=0.\label{2.12}
\end{gather}
\end{prop}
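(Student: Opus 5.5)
Because $m_3$ has the single term $u\,w$ with $u=X_{-\epsilon_1-\epsilon_3}X_{-\epsilon_2-\epsilon_3}X_{-\epsilon_1-\epsilon_2}$, a generator of the one-dimensional space $\wedge^3(\ggg_{-1})$, I will check the three kinds of conditions in a maximal vector (weight, $\nnn_0$, $\ggg_{+1}$) one at a time. Under $\ggg_{\0}$ the line $\wedge^3(\ggg_{-1})$ is the top exterior power of the $\ggg_{\0}$-module $\ggg_{-1}$, so it carries the character $\operatorname{tr}(\operatorname{ad}|_{\ggg_{-1}})$. The weights of $\ggg_{-1}$ are $-\epsilon_1-\epsilon_2,-\epsilon_1-\epsilon_3,-\epsilon_2-\epsilon_3$, summing to $-2(\epsilon_1+\epsilon_2+\epsilon_3)$, which vanishes on $\hhh$. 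Also $\operatorname{ad} X$ is nilpotent on $\ggg_{-1}$ for every root vector $X\in\ggg_{\0}$, so its trace is zero. Hence for every $x\in\ggg_{\0}$ we have $[x,u]=0$ in $U(\ggg)$, modulo nothing, because $\wedge(\ggg_{-1})\subset U(\ggg)$ is the exterior algebra on the odd abelian subalgebra $\ggg_{-1}$. It follows that $x\cdot uw=u\,(xw)$.

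This gives the weight and $\nnn_0$ conditions immediately. The vector $m_3$ has weight $(\mu_1,\mu_2)$ if and only if $w$ satisfies (\ref{2.11}). Moreover $X_{\epsilon_1-\epsilon_2}m_3=X_{\epsilon_2-\epsilon_3}m_3=0$ if and only if $X_{\epsilon_1-\epsilon_2}w=X_{\epsilon_2-\epsilon_3}w=0$, since $K_\chi(\lambda)\cong\wedge(\ggg_{-1})\otimes L^0_\chi(\lambda)$ is free and so $uw'=0$ forces $w'=0$. One caveat: the weight bookkeeping must match the paper's convention, namely the weight of $m_3$ equals the weight of $w$ plus the weight of $u$, and the latter is zero on $\hhh$.

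The main step is the $\ggg_{+1}$ action. Since $\ggg_{+1}$ is generated by $X_{2\epsilon_1}$ and the $\ggg_{\0}$-module structure, and $\nnn_0$ already kills $m_3$, it is enough to test all of $X_{2\epsilon_i}$ and $X_{\epsilon_i+\epsilon_j}$ directly. For $y\in\ggg_{+1}$ we have $y\,uw=[y,u]w$ because $yw=0$. I will expand $[y,u]$ with the super Leibniz rule. Each bracket $[y,X_{-\epsilon_a-\epsilon_b}]$ lies in $\ggg_{\0}$, and I will compute it from the matrix realization. I will then move the resulting even element to the right past the remaining two odd factors, picking up commutators that lie in $\ggg_{-1}$, until everything is in the normal form $\sum (\wedge^2\ggg_{-1})\otimes L^0_\chi(\lambda)$.

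For example, with $y=X_{2\epsilon_3}$ the brackets with $X_{-\epsilon_1-\epsilon_3}$ and $X_{-\epsilon_2-\epsilon_3}$ give, up to sign, $X_{-\epsilon_1+\epsilon_3}$ and $X_{-\epsilon_2+\epsilon_3}$, and the bracket with $X_{-\epsilon_1-\epsilon_2}$ is $0$. After reordering, the result should read $\pm X_{-\epsilon_2-\epsilon_3}X_{-\epsilon_1-\epsilon_2}X_{-\epsilon_1+\epsilon_3}w \pm X_{-\epsilon_1-\epsilon_3}X_{-\epsilon_1-\epsilon_2}X_{-\epsilon_2+\epsilon_3}w$ plus terms involving $\hhh$ and $\nnn_0$ applied to $w$. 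The other generators should give analogous expressions. I expect the weight terms to cancel, using that $\operatorname{tr}$ vanishes on $\ggg_{-1}$, which is the same mechanism as in the first paragraph. Freeness of $K_\chi(\lambda)$ over $\wedge(\ggg_{-1})$ then turns the vanishing of $y\,m_3$ for all $y$ into the independent conditions $X_{-\epsilon_2+\epsilon_3}w=X_{-\epsilon_1+\epsilon_3}w=0$, together with conditions already implied by $\nnn_0w=0$.

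The main obstacle is this sign and commutator bookkeeping, in particular checking that the $\hhh$-terms cancel in characteristic $3$ without imposing an extra condition on $(\mu_1,\mu_2)$. For the converse direction, I will note that every term produced is a linear combination of the displayed vectors, so (\ref{2.11}) and (\ref{2.12}) make every $y\,m_3$ vanish.
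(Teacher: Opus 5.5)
Your treatment of the weight and $\nnn_0$ conditions is correct. Your prediction for $X_{2\epsilon_3}$ is also right, and in fact no extra terms appear:
\[
X_{2\epsilon_3}m_3=X_{-\epsilon_2-\epsilon_3}X_{-\epsilon_1-\epsilon_2}X_{-\epsilon_1+\epsilon_3}w-X_{-\epsilon_1-\epsilon_3}X_{-\epsilon_1-\epsilon_2}X_{-\epsilon_2+\epsilon_3}w .
\]
Freeness then gives the ``only if'' direction.

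The gap is in the converse. You expect the other generators of $\ggg_{+1}$ to produce only terms killed by $\nnn_0w=0$, with the $\hhh$-terms cancelling. That is false. Carrying out your own expansion with the paper's matrices gives the following components:
\begin{itemize}
\item the component of $X_{2\epsilon_2}m_3$ along $X_{-\epsilon_1-\epsilon_3}X_{-\epsilon_2-\epsilon_3}$ is $X_{-\epsilon_1+\epsilon_2}w$;
\item the component of $X_{\epsilon_1+\epsilon_2}m_3$ along $X_{-\epsilon_1-\epsilon_3}X_{-\epsilon_2-\epsilon_3}$ is $\mu_1w$, coming from $[X_{\epsilon_1+\epsilon_2},X_{-\epsilon_1-\epsilon_2}]=H_{\epsilon_1-\epsilon_2}$;
\item the component of $X_{\epsilon_2+\epsilon_3}m_3$ along $X_{-\epsilon_1-\epsilon_3}X_{-\epsilon_1-\epsilon_2}$ is $-\mu_2w$.
\end{itemize}
So maximality genuinely imposes $X_{-\epsilon_1+\epsilon_2}w=0$ and $\mu_1w=\mu_2w=0$. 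None of these follows from $\nnn_0w=0$, and neither $X_{-\epsilon_1+\epsilon_2}w$ nor $H_{\epsilon_i-\epsilon_{i+1}}w$ is among the vectors in \eqref{2.12}. Your closing claim, that every term is a combination of the displayed vectors, therefore fails, and the converse is not established as written.

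The statement is still true, and the repair is short. First, the extra conditions are implied by \eqref{2.12}. The four operators there generate $\ggg_{\0}$; for instance $X_{-\epsilon_1+\epsilon_2}=[X_{\epsilon_2-\epsilon_3},X_{-\epsilon_1+\epsilon_3}]$ and $H_{\epsilon_1-\epsilon_2}=[X_{\epsilon_1-\epsilon_2},X_{-\epsilon_1+\epsilon_2}]$. Hence \eqref{2.12} forces $\ggg_{\0}w=0$, and for $w\neq0$ it forces $(\mu_1,\mu_2)=(0,0)$. The restriction on the weight that you hoped to avoid is real, but automatic.

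Second, you do not need the other five generators at all if you use the idea in your own opening sentence with the right generator. $X_{2\epsilon_1}$ is the highest weight vector of $\ggg_{+1}$, so $\operatorname{ad}\nnn_0$ kills it. The lowest weight vector $X_{2\epsilon_3}$, by contrast, generates $\ggg_{+1}$ under $\operatorname{ad}\nnn_0$ in characteristic $3$:
\[
[X_{\epsilon_2-\epsilon_3},X_{2\epsilon_3}]=X_{\epsilon_2+\epsilon_3},\quad [X_{\epsilon_2-\epsilon_3},X_{\epsilon_2+\epsilon_3}]=2X_{2\epsilon_2},\quad [X_{\epsilon_1-\epsilon_2},X_{2\epsilon_2}]=X_{\epsilon_1+\epsilon_2},
\]
\[
[X_{\epsilon_1-\epsilon_3},X_{2\epsilon_3}]=X_{\epsilon_1+\epsilon_3},\quad [X_{\epsilon_1-\epsilon_3},X_{\epsilon_1+\epsilon_3}]=2X_{2\epsilon_1}.
\]
For $n\in\nnn_0$ and $y\in\ggg_{+1}$ we have $[n,y]m_3=n(ym_3)-y(nm_3)$. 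So the conditions $\nnn_0m_3=0$ and $X_{2\epsilon_3}m_3=0$ already give $\ggg_{+1}m_3=0$. With this, both directions follow from your first two steps and the single displayed formula above. That is a cleaner route than the paper's unspecified ``direct computation''.
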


\subsection{}
Since the structure of the irreducible $U_{\chi}(\ggg_{\0})$-module was given in \cite{XW17}, we can use Proposition \ref{m1}, \ref{m2} and \ref{m3} to compute the maximal vectors of $K_{\chi}(\lambda)$.
\begin{prop} (See \cite[Proposition 2.7]{RB23})
Suppose $\chi=\chi_1$ is regular semisimple and $\lambda\in\Lambda_{\chi}$. Then $K_{\chi}(\lambda)$ is irreducible.
\end{prop}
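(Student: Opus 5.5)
The plan is to show that every nonzero submodule of $K_{\chi}(\lambda)$ contains the generating vector $1\otimes L_{\chi}^0(\lambda)$. By the preceding discussion, it suffices to find all homogeneous maximal vectors, i.e.\ vectors killed by $\nnn$, and to show that the only ones are those in $1\otimes L^0_{\chi}(\lambda)$.

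First, the structure of $L_{\chi}^0(\lambda)$ for regular semisimple $\chi_1$. Since $\chi_1(\nnn_0)=0$ and $\chi_1(X)\neq 0$ never occurs on root vectors, each root vector $X_{\alpha}$ acts with $X_\alpha^p=0$. For $\chi$ regular semisimple, $Z_\chi^0(\lambda)$ is already simple of dimension $p^3=27$; this is standard, see \cite{J98} and \cite{XW17}. Hence $L^0_\chi(\lambda)=Z^0_\chi(\lambda)$, with basis $X_{-\epsilon_1+\epsilon_3}^cX_{-\epsilon_2+\epsilon_3}^bX_{-\epsilon_1+\epsilon_2}^av$. Its $\hhh$-weights are $\lambda-\sum$ of positive roots. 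Because $\lambda(h)^p-\lambda(h)=\chi_1(h)^p\neq0$ for $h=H_{\epsilon_1-\epsilon_2},H_{\epsilon_2-\epsilon_3},H_{\epsilon_1-\epsilon_3}$, the numbers $r,s,r+s$ are not in $\mathbb{F}_3$. In particular $\delta(\lambda)=rs(r+s+1)\neq0$.

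With this in hand, the statement follows at once from Proposition \ref{delta}, since every $\lambda\in\Lambda_{\chi_1}$ is typical. This is the route I would actually write. As a self-contained check, I would also verify the claim through Propositions \ref{m1}--\ref{m3}, showing there are no maximal vectors $u_iv_i$ with $i\ge1$.

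For $m_3$, condition \eqref{2.12} asks for $w$ killed by $\nnn_0$ together with $X_{-\epsilon_2+\epsilon_3}$ and $X_{-\epsilon_1+\epsilon_3}$. In the baby Verma module $Z^0_{\chi_1}(\lambda)$, the only $\nnn_0$-invariants are multiples of $v$. Since $X_{-\epsilon_1+\epsilon_3}v\neq0$, no nonzero $w$ works.

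For $m_1$ and $m_2$, the weight conditions force each $w_i$ to be a weight vector. Conditions \eqref{2.4}--\eqref{2.5} and \eqref{2.9}--\eqref{2.10} then give an inhomogeneous $\nnn_0$-system, which pins the $w_i$ down to a few top weight spaces. Solving it produces scalar equations whose coefficients are polynomials in $r$ and $s$ with factors $r$, $s$, $r+s+1$. These factors are exactly those of $\delta$, and all are nonzero, so every $w_i$ vanishes.

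The main obstacle is this last computation: writing $X_{\epsilon_i-\epsilon_j}$ acting on the PBW basis in characteristic $3$ and extracting the determinant condition. I would avoid it by citing Proposition \ref{delta}. The genuinely needed input is only $r,s,r+s\notin\mathbb{F}_3$. This already forces $rs\neq0$. It also forces $r+s+1\neq0$, since otherwise $r+s=-1\in\mathbb{F}_3$.
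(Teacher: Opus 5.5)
Your argument is correct, but it is not the paper's route. The paper gives no internal proof and simply imports the statement from \cite[Proposition 2.7]{RB23}. You instead obtain it inside the paper's own framework as a corollary of Proposition \ref{delta}.

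Your reduction runs as follows. The elements $H_{\epsilon_1-\epsilon_2}$, $H_{\epsilon_2-\epsilon_3}$ and $H_{\epsilon_1-\epsilon_3}=H_{\epsilon_1-\epsilon_2}+H_{\epsilon_2-\epsilon_3}$ are diagonal with entries in $\bbf_3$, so they are toral ($h^{[p]}=h$). Hence $\lambda\in\Lambda_{\chi_1}$ forces each of $r^3-r$, $s^3-s$ and $(r+s)^3-(r+s)$ to be a nonzero cube. Therefore $r,s,r+s\notin\bbf_3$, and so $\delta(\lambda)=rs(r+s+1)\neq 0$. You should state the torality explicitly, since it is exactly what turns the defining condition of $\Lambda_\chi$ into $\lambda(h)^3-\lambda(h)=\chi_1(h)^3$.

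This is the same reduction the paper itself uses for $\chi_2$, $\chi_4$ and $\chi_6$, and it buys something. It shows that the $\chi_1$ case of item (2) of Theorem \ref{chara} is already contained in item (1). The cost is that it depends on the general typicality criterion \cite[Theorem 5.5]{Z24} rather than on a result specific to regular semisimple $p$-characters.

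Your supplementary maximal-vector check is fine for $m_3$. For $m_1$ and $m_2$, however, it only asserts that the resulting scalar equations have coefficients with factors $r$, $s$, $r+s+1$, without deriving them. It should therefore not be presented as an independent proof. Since your actual argument rests on Proposition \ref{delta}, you can simply drop it.
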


Let $\chi=\chi_2$, where $\chi_2$ is subregular semisimple. Suppose $\lambda=(r,s)\in\Lambda_{\chi}$. Then $r\in\bbf_p, s\notin\bbf_p$.
As $\delta(\lambda)=rs(r+s+1)$, by Proposition \ref{delta}, we only need to consider the case when $r=0$.
In this case,
$$L^0_{\chi}(\lambda)\cong U_{\chi}(\mathfrak{u})\otimes \K_{\lambda},$$
where $\mathfrak{u}$ is the subalgebra of $\ggg_{\0}$ generated by $X_{-\epsilon_1+\epsilon_3}$ and $X_{-\epsilon_2+\epsilon_3}$.
\begin{prop}\label{chi2}
Let $\chi=\chi_2$ be subregular semisimple and $\lambda\in\Lambda_{\chi}$. Suppose  $\lambda=(0, s), s\notin\bbf_p$. Then $$[K_{\chi}(\lambda)]=[L_{\chi}(\lambda)]+[L_{\chi}(\lambda-\epsilon_1-\epsilon_2)].$$
\end{prop}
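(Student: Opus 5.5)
The plan is to locate all homogeneous maximal vectors of $K_{\chi}(\lambda)$ with Propositions \ref{m1}, \ref{m2} and \ref{m3}, and then to match them against a dimension count. Here $\lambda=(0,s)$ and $L^0_{\chi}(\lambda)\cong U_{\chi}(\mathfrak{u})\otimes\K_{\lambda}$. It has dimension $9$ and basis $X_{-\epsilon_1+\epsilon_3}^cX_{-\epsilon_2+\epsilon_3}^bv$ with $0\le b,c\le 2$, where $X_{-\epsilon_1+\epsilon_2}v=0$ and the action of $\nnn_0$ is explicit. Hence $\dim K_{\chi}(\lambda)=8\cdot 9=72$.

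\textbf{Degree $1$.} Take $m_1=X_{-\epsilon_1-\epsilon_2}v$, i.e. $w_1=v$ and $w_2=w_3=0$ in Proposition \ref{m1}.
\begin{itemize}
\item Conditions \eqref{2.3}--\eqref{2.5} reduce to $X_{\epsilon_1-\epsilon_2}v=X_{\epsilon_2-\epsilon_3}v=0$, which hold.
\item Conditions \eqref{2.1}--\eqref{2.2} give the weight $(\mu_1,\mu_2)=(0,s-1)$, which is $\lambda-\epsilon_1-\epsilon_2$.
\end{itemize}
So $m_1$ is a maximal vector. Since $(0,s-1)$ is again atypical with $s-1\notin\bbf_p$, the submodule $N:=U_{\chi}(\ggg)m_1$ is a nonzero quotient of $K_{\chi}(\lambda-\epsilon_1-\epsilon_2)$, and therefore has head $L_{\chi}(\lambda-\epsilon_1-\epsilon_2)$.

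I would then solve the linear systems of Propositions \ref{m1}, \ref{m2} and \ref{m3} in the explicit $9$-dimensional model, using weights to cut down the unknowns. Each $w_i$ is forced into a single $\hhh$-weight space, and these are at most one-dimensional because $s\notin\bbf_p$. The aim is to show three things:
\begin{enumerate}
\item In degree $1$ the only solution is $m_1$ up to scalar.
\item In degree $3$, \eqref{2.12} forces $w$ to be a multiple of the lowest vector $X_{-\epsilon_1+\epsilon_3}^2X_{-\epsilon_2+\epsilon_3}^2v$. The conditions $X_{\epsilon_1-\epsilon_2}w=X_{\epsilon_2-\epsilon_3}w=0$ should then fail, because they involve factors like $s-j$ with $s\notin\bbf_p$, or else they produce a vector already lying in $N$.
\item In degree $2$, any solution of \eqref{2.6}--\eqref{2.10} lies in $N$.
\end{enumerate}
Since $\nnn$ acts nilpotently ($\chi(\nnn_0)=0$ and $\ggg_{+1}^2$ acts trivially on graded pieces), every nonzero submodule contains a maximal vector. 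It follows that every proper submodule meets $N$ and is generated by maximal vectors inside $N$. Hence $N$ is the unique maximal submodule, and $K_{\chi}(\lambda)/N=L_{\chi}(\lambda)$.

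It remains to show that $N$ is simple, i.e. $N\cong L_{\chi}(\lambda-\epsilon_1-\epsilon_2)$. I would do this by a dimension count.
\begin{itemize}
\item First compute $N$ explicitly. Apply $\ggg_{\0}$ and $\ggg_{-1}$ to $m_1$, using $[X_{\epsilon_1-\epsilon_3},X_{-\epsilon_1-\epsilon_2}]$-type brackets to move between the $X_{-\epsilon_i-\epsilon_j}$. The target is $\dim N=36$, so that $\dim L_{\chi}(\lambda)=36$.
\item The same computation, applied with $\lambda$ replaced by $\lambda-\epsilon_1-\epsilon_2$, gives $\dim L_{\chi}(\lambda-\epsilon_1-\epsilon_2)=36$. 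Here I use that the argument is uniform in $s\notin\bbf_p$.
\item A surjection $K_{\chi}(\lambda-\epsilon_1-\epsilon_2)\to N$ onto a module of dimension $36$ whose maximal vectors are only multiples of $m_1$ then has simple image. This yields $[K_{\chi}(\lambda)]=[L_{\chi}(\lambda)]+[L_{\chi}(\lambda-\epsilon_1-\epsilon_2)]$.
\end{itemize}

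I expect the main obstacle to be the explicit degree-$2$ and degree-$3$ computations, which decide that no further maximal vectors exist outside $N$. I would first get the precise $\nnn_0$-action on $U_{\chi}(\mathfrak{u})\otimes\K_{\lambda}$ in characteristic $3$ from \cite{XW17}. With that in hand, \eqref{2.8}--\eqref{2.10} become a small triangular linear system to check directly.
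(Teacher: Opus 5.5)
\textbf{The degree-one vector at the base of your argument is not maximal.} You read \eqref{2.5} as just $X_{\epsilon_2-\epsilon_3}w_1=0$. Its middle equation is in fact $-w_1+X_{\epsilon_2-\epsilon_3}w_2=0$. That equation records the cross term $[X_{\epsilon_2-\epsilon_3},X_{-\epsilon_1-\epsilon_2}]=-X_{-\epsilon_1-\epsilon_3}$, so with $w_1=v$ and $w_2=0$ you get $X_{\epsilon_2-\epsilon_3}X_{-\epsilon_1-\epsilon_2}v=-X_{-\epsilon_1-\epsilon_3}v\neq 0$.

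This breaks your construction of $N$. Similarly $[X_{\epsilon_1+\epsilon_3},X_{-\epsilon_1-\epsilon_2}]=-X_{-\epsilon_2+\epsilon_3}$, so $X_{\epsilon_1+\epsilon_3}X_{-\epsilon_1-\epsilon_2}v=-X_{-\epsilon_2+\epsilon_3}v$ is a nonzero vector of the simple module $L^0_\chi(\lambda)$. Hence $U_\chi(\ggg)X_{-\epsilon_1-\epsilon_2}v=K_\chi(\lambda)$: your $N$ is the whole module, and everything built on it collapses.

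The missing step is to solve for correction terms, as the paper does. With $w_1=v$, \eqref{2.4}--\eqref{2.5} require $X_{\epsilon_1-\epsilon_2}w_2=0$, $X_{\epsilon_2-\epsilon_3}w_2=v$, $X_{\epsilon_1-\epsilon_2}w_3=w_2$ and $X_{\epsilon_2-\epsilon_3}w_3=0$. Using $s\notin\bbf_p$, this gives $w_2=\frac{1}{s}X_{-\epsilon_2+\epsilon_3}v$ and $w_3=-\frac{1}{s}X_{-\epsilon_1+\epsilon_3}v$, so
\[ m_1=X_{-\epsilon_1-\epsilon_2}v+\tfrac{1}{s}X_{-\epsilon_1-\epsilon_3}X_{-\epsilon_2+\epsilon_3}v-\tfrac{1}{s}X_{-\epsilon_2-\epsilon_3}X_{-\epsilon_1+\epsilon_3}v. \]
This vector does have the weight $\lambda-\epsilon_1-\epsilon_2=(0,s-1)$ you predicted. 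The cases $w_1=0$, and the vectors $m_2$ and $m_3$, then yield nothing.

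\textbf{Your plan for solving these systems also rests on a false premise.} In characteristic $3$ the weight spaces of $L^0_\chi(\lambda)$ are not one-dimensional, and $s\notin\bbf_p$ has no bearing on this. When $p=3$, $(\epsilon_1-\epsilon_3)+(\epsilon_2-\epsilon_3)$ vanishes on $\hhh$; equivalently, $\mathrm{diag}(I_3,-I_3)\in\hhh$ is central in $\ggg_{\0}$. So $X_{-\epsilon_1+\epsilon_3}^cX_{-\epsilon_2+\epsilon_3}^bv$ has weight $(b-c,\,s+b-c)$. The nine basis vectors therefore share only three weights, each with a three-dimensional weight space.

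That is why the paper writes $w_2$ with three unknowns $k_{2,0,0},k_{0,1,0},k_{1,2,0}$. The extra components $X_{-\epsilon_1+\epsilon_3}^2v$ and $X_{-\epsilon_1+\epsilon_3}X_{-\epsilon_2+\epsilon_3}^2v$ have to be eliminated using the $\nnn_0$-action. Your ``one unknown per $w_i$'' picture is the $p>3$ situation.

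\textbf{A smaller logical point.} Suppose the correct $m_1$ is shown to be the only other maximal vector. What follows is that $U_\chi(\ggg)m_1$ lies in every nonzero submodule, so it is the simple socle. It does not follow that it is the unique maximal submodule. Simplicity of the quotient has to come from your dimension count, which you state only as a target. The paper itself reads the composition factors directly off the list of maximal vectors.
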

\begin{proof}
Let $m_1, m_2, m_3$ be $\bbb$-maximal vectors given in Propositions \ref{m1}, \ref{m2} and \ref{m3} . Let $v$ be the unique nonzero maximal vector of $L_{\chi}^0(\lambda)$ up to a scalar.
By (\ref{2.4}) and (\ref{2.5}), $X_{\epsilon_1-\epsilon_2}w_1=X_{\epsilon_2-\epsilon_3}w_1=0$. So $w_1$ is $\nnn_0$-maximal. Suppose $w_1\neq 0$. We may assume $w_1=v$. The weight of $w_1$ is $(0, s)$. By (\ref{2.1}) and (\ref{2.2}), the weight of $w_2$ is $(1,s+1)$. Let $$w_2=\sum\limits_{a,b,c} k_{c,b,0}X_{-\epsilon_1+\epsilon_3}^{c}X_{-\epsilon_2+\epsilon_3}^{b}v, k_{c,b,0}\in\K.$$
Comparing the weights of $w_1$ and $w_2$, we have
$$2c+b=1.$$
So
$$w_2=k_{2,0,0}X_{-\epsilon_1+\epsilon_3}^{2}v
+k_{0,1,0}X_{-\epsilon_2+\epsilon_3}v+
k_{1,2,0}X_{-\epsilon_1+\epsilon_3}X^2_{-\epsilon_2+\epsilon_3}v.$$
By (\ref{2.4}),
\begin{eqnarray*}
X_{\epsilon_1-\epsilon_2}w_2=k_{2,0,0}X_{-\epsilon_1+\epsilon_3}X_{-\epsilon_2+\epsilon_3}v=0.
\end{eqnarray*}
So $k_{2,0,0}=0$.
By (\ref{2.5}),
\begin{eqnarray*}
X_{\epsilon_2-\epsilon_3}w_2=k_{0,1,0}sv+(2s-1)k_{1,2,0}X_{-\epsilon_1+\epsilon_3}X_{-\epsilon_2+\epsilon_3}v
=v.
\end{eqnarray*}
So $$w_2=\frac{1}{s}X_{-\epsilon_2+\epsilon_3}v.$$
By (\ref{2.1}) and (\ref{2.2}), the weight of $w_3$ is $(-1, s-1)$. So
$$w_3=t_{1,0,0}X_{-\epsilon_1+\epsilon_3}v+t_{2,1,0}X_{-\epsilon_1+\epsilon_3}^2X_{-\epsilon_2+\epsilon_3}v+t_{0,2,0}X_{-\epsilon_2+\epsilon_3}^2v.$$
By (\ref{2.4}),
\begin{eqnarray*}
X_{\epsilon_1-\epsilon_2}w_3=w_2
=2t_{1,0,0}X_{-\epsilon_2+\epsilon_3}v+t_{2,1,0}X_{-\epsilon_1+\epsilon_3}X_{-\epsilon_2+\epsilon_3}^2v
=\frac{1}{s}X_{-\epsilon_2+\epsilon_3}v.
\end{eqnarray*}
So $t_{1,0,0}=-\frac{1}{s},t_{2,1,0}=0$. By (\ref{2.5}),
$$X_{\epsilon_2-\epsilon_3}w_3=0=(2s-2)t_{0,2,0}X_{-\epsilon_2+\epsilon_3}v.$$
So
$$w_3=-\frac{1}{s}X_{\epsilon_3-\epsilon_1}v.$$
Then it can be checked that $$m_1=X_{-\epsilon_1-\epsilon_2}v+\frac{1}{s}X_{-\epsilon_1-\epsilon_3}X_{-\epsilon_2+\epsilon_3}v-\frac{1}{s}X_{-\epsilon_2-\epsilon_3}X_{-\epsilon_1+\epsilon_3}v$$
satisfies (\ref{2.1})-(\ref{2.5}), and it is a maximal vector of $K_{\chi}(\lambda)$.

Suppose $w_1=0$. By (\ref{2.4}) and (\ref{2.5}), $w_2$ is a nonzero multiple of $v$ if $w_2\neq 0$. We may assume $w_2=v$. By (\ref{2.1}) and (\ref{2.2}), the weight of $w_3$ is $(-2,s+1)$. Then
$$w_3=t_{2,0,0}X_{-\epsilon_1+\epsilon_3}^{2}v
+t_{0,1,0}X_{-\epsilon_2+\epsilon_3}v+
t_{1,2,0}X_{-\epsilon_1+\epsilon_3}X^2_{-\epsilon_2+\epsilon_3}v.$$
As
$$X_{\epsilon_1-\epsilon_2}w_3=t_{2,0,0}X_{-\epsilon_1+\epsilon_3}X_{-\epsilon_2+\epsilon_3}v\neq v.$$
It contradicts (\ref{2.4}). So $w_2=0$.
By (\ref{2.4}) and (\ref{2.5}), $w_3$ is a nonzero multiple of $v$ if $w_3\neq 0$. We may assume $w_3=v$. It contradicts (\ref{2.3}). So $w_1=w_2=w_3=0$ and $m_1=0$.
Applying a similar argument, by (\ref{2.6})-(\ref{2.12}), it can be checked that $m_2=m_3=0$.

So $v$ and $X_{-\epsilon_1-\epsilon_2}v+\frac{1}{s}X_{-\epsilon_1-\epsilon_3}X_{-\epsilon_2+\epsilon_3}v-\frac{1}{s}X_{-\epsilon_2-\epsilon_3}X_{-\epsilon_1+\epsilon_3}v$ are the only two maximal vectors of $K_{\chi}(\lambda)$ up to a scalar.
\end{proof}

Let $\chi=\chi_6$ be regular nilpotent, $s_1=s_{\epsilon_1-\epsilon_2}$ and $s_2=s_{\epsilon_2-\epsilon_3}$ be the simple reflections in $W=S_3$. Then we have the
following four $W$-orbits in $\Lambda_{\chi}=\Lambda_{0}$.

(1) $\lambda_1=(0,0)$, $W\cdot\lambda_1=\{(0,0), (1,1)\}$.

(2) $\lambda_2=(2,2)$, $W\cdot\lambda_2=\{(2,2)\}$.

(3) $\lambda_3=(1,0)$, $W\cdot\lambda_3=\{(0,2), (2,1)\}$.

(4) $\lambda_4=(0,1)$, $W\cdot\lambda_4=\{(1,2), (2,0)\}$.

According to \cite[C.3]{J04}, $L^0_{\chi}(\lambda)\cong L^0_{\chi}(w\cdot\lambda)$, and so $K_{\chi}(\lambda)\cong K_{\chi}(w\cdot\lambda)$ for $w\in W$. So to classify the simple $U_{\chi}(\ggg)$-modules, we just need to calculate the composition factors of $K_{\chi}(\lambda)$, for $\lambda=\lambda_i$, $i = 1,2,3,4$. Note that there is a typical weight in the $W$-orbit of $\lambda$ when $\lambda=\lambda_i,i=2,3,4$. Then we have:

\begin{prop}
Let $\chi=\chi_6$ be regular nilpotent, and let $\lambda=\lambda_i,i=2,3,4$ be as listed above. Then
$$[K_{\chi}(\lambda)]=[L_{\chi}(\lambda)].$$
\end{prop}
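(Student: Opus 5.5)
The plan is to use $W$-invariance of Kac modules for regular nilpotent $\chi$ together with Proposition \ref{delta}. The text already records that $L^0_{\chi}(\lambda)\cong L^0_{\chi}(w\cdot\lambda)$ for all $w\in W$ (from \cite[C.3]{J04}). Since $K_{\chi}(\lambda)=\text{Ind}_{\ggg_{\0}\oplus\ggg_{+1}}^{\ggg}(L^0_{\chi}(\lambda))$ depends only on the $\ggg_{\0}\oplus\ggg_{+1}$-module structure, with $\ggg_{+1}$ acting trivially, it follows that $K_{\chi}(\lambda)\cong K_{\chi}(w\cdot\lambda)$. So it suffices to find, in each orbit $W\cdot\lambda_i$ for $i=2,3,4$, one weight $\mu$ with $\delta(\mu)\neq 0$. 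Proposition \ref{delta} then shows that $K_{\chi}(\mu)$ is irreducible, hence so is $K_{\chi}(\lambda_i)\cong K_{\chi}(\mu)$.

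Next I would compute $\delta(\mu)=rs(r+s+1)$ over $\bbf_3$ for the listed orbit elements.
\begin{itemize}
\item[(2)] For $\lambda_2=(2,2)$: $\delta=2\cdot2\cdot(5)=20\equiv 2\neq0$.
\item[(3)] For $(2,1)\in W\cdot\lambda_3$: $\delta=2\cdot1\cdot 4=8\equiv 2\neq 0$.
\item[(4)] For $(1,2)\in W\cdot\lambda_4$: $\delta=1\cdot 2\cdot 4\equiv 2\neq0$.
\end{itemize}
In each orbit a typical weight therefore exists, as the text anticipates.

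Finally I would identify the irreducible module with $L_{\chi}(\lambda)$. Since $K_{\chi}(\lambda)$ is irreducible, its unique simple quotient is itself, so $K_{\chi}(\lambda)=L_{\chi}(\lambda)$ and $[K_{\chi}(\lambda)]=[L_{\chi}(\lambda)]$.

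The main point needing care is the transfer of irreducibility along the dot-orbit. One has to confirm that the isomorphism $L^0_\chi(\lambda)\cong L^0_\chi(w\cdot\lambda)$ is an isomorphism of $U_\chi(\ggg_{\0})$-modules for the same $\chi=\chi_6$, which it is because $\Lambda_{\chi_6}=\Lambda_0$ is $W$-stable. Once that holds, induction is functorial, and the rest is the arithmetic above.
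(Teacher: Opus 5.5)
Your proposal is correct and is the paper's own argument: the paper records $K_{\chi}(\lambda)\cong K_{\chi}(w\cdot\lambda)$ via \cite[C.3]{J04} and notes that each orbit $W\cdot\lambda_i$ ($i=2,3,4$) contains a typical weight, and your check $\delta(2,2)\equiv\delta(2,1)\equiv\delta(1,2)\equiv 2 \pmod 3$ makes that explicit before invoking Proposition \ref{delta}. One small correction to your last paragraph: $W$-stability of $\Lambda_0$ only guarantees $w\cdot\lambda\in\Lambda_{\chi}$, whereas the isomorphism $L^0_{\chi}(\lambda)\cong L^0_{\chi}(w\cdot\lambda)$ itself is the cited input from \cite[C.3]{J04}, which rests on $\chi_6$ being nonzero on $X_{-\epsilon_1+\epsilon_2}$ and $X_{-\epsilon_2+\epsilon_3}$.
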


Let $\chi=\chi_6$ be regular nilpotent and $\lambda=(0,0)$. Let $v$ be the maximal vector of $Z_{\chi}^0(\lambda)$ with weight $\lambda$, by the proof of \cite[Theorem 1]{XW17},
$$v'=X_{-\epsilon_1+\epsilon_2}v+X_{-\epsilon_2+\epsilon_3}v+X_{-\epsilon_1+\epsilon_3}X_{-\epsilon_2+\epsilon_3}X_{-\epsilon_1+\epsilon_2}v
+X_{-\epsilon_2+\epsilon_3}^2X_{-\epsilon_1+\epsilon_2}^2v$$ is a maximal vector of $Z_{\chi}^0(\lambda)$ and $L_{\chi}^0(\lambda)=\ggg_{\0}\cdot v'$.
By direct computation, we can choose a basis of $L_{\chi}^0(\lambda)$:
$$L_{\chi}^0(\lambda)=Span_{\K}\{X_{-\epsilon_1+\epsilon_3}^cX_{-\epsilon_2+\epsilon_3}^bv|0\leq b,c\leq 2\},$$
and we list an equation which will be used in the proof of the next proposition:
\begin{gather}
X_{\epsilon_3-\epsilon_2}^2X_{\epsilon_2-\epsilon_1}v=v-X_{\epsilon_3-\epsilon_1}X_{\epsilon_3-\epsilon_2}v.\label{2.14}
\end{gather}
\begin{prop}\label{chi6}
Let $\chi=\chi_6$ be regular nilpotent and $\lambda=(0,0)$. Then
$$[K_{\chi}(\lambda)]=[L_{\chi}(\lambda)].$$
\end{prop}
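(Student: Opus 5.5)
Following the method of Proposition \ref{chi2}, we will show that, up to a scalar, the only maximal vector of $K_{\chi}(\lambda)$ is $v$ itself (sitting in $\wedge^0(\ggg_{-1})\otimes L^0_{\chi}(\lambda)$). Then every nonzero submodule, which must contain a maximal vector, contains $v$; since $v$ generates $K_{\chi}(\lambda)$, the module is simple, so $[K_{\chi}(\lambda)]=[L_{\chi}(\lambda)]$. By \cite[Proposition 2.2]{R26}, it suffices to prove $m_1=m_2=m_3=0$ for the homogeneous vectors of Propositions \ref{m1}, \ref{m2}, \ref{m3}. Throughout we use the basis $\{X_{-\epsilon_1+\epsilon_3}^cX_{-\epsilon_2+\epsilon_3}^bv\mid 0\le b,c\le 2\}$ of $L^0_{\chi}(\lambda)$ and relation (\ref{2.14}) to rewrite any occurrence of $X_{-\epsilon_1+\epsilon_2}$ in this basis. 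Note that $\chi(X_{-\epsilon_1+\epsilon_2})=\chi(X_{-\epsilon_2+\epsilon_3})=1$, so $X_{-\epsilon_2+\epsilon_3}^3$ acts as the scalar $1$ (since $\chi^p=1$).

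\textbf{Step 1: $m_3=0$.} By (\ref{2.12}), $X_{-\epsilon_2+\epsilon_3}w=0$. Since $X_{-\epsilon_2+\epsilon_3}^3=1$ on $L^0_\chi(\lambda)$, this operator is invertible, so $w=0$.

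\textbf{Step 2: $m_1$.} By (\ref{2.4}) and (\ref{2.5}), $w_1$ is $\nnn_0$-maximal. We will first determine the space of $\nnn_0$-invariants of $L^0_\chi(\lambda)$. By the simplicity of $L^0_\chi(\lambda)$ and \cite{XW17}, we expect this space to be one-dimensional, spanned by $v$. So either $w_1=0$ or we may take $w_1=v$. In the latter case, the weight constraints (\ref{2.1}) and (\ref{2.2}) force $w_2$ and $w_3$ to lie in specific weight spaces. These are the spans of the basis vectors $X_{-\epsilon_1+\epsilon_3}^cX_{-\epsilon_2+\epsilon_3}^bv$ with the appropriate values of $(2c+b,\,-c+b)$ mod $3$ (the weights are read mod $3$). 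We then write $w_2,w_3$ with unknown coefficients and impose (\ref{2.3})--(\ref{2.5}). The computation of $X_{\epsilon_1-\epsilon_2}$ and $X_{\epsilon_2-\epsilon_3}$ on basis vectors uses commutators together with (\ref{2.14}). The aim is to reach an inconsistent linear system, for instance $X_{\epsilon_2-\epsilon_3}w_2=v$ being impossible because $X_{\epsilon_2-\epsilon_3}$ maps the relevant weight space into a span not containing $v$. This would be the key difference from Proposition \ref{chi2}, where $s\ne0$ made that equation solvable. Next we treat $w_1=0$. Then $w_2$ is $\nnn_0$-invariant, so $w_2=0$ or $w_2=v$, and the case $w_2=v$ is ruled out by the $w_3$ equation in (\ref{2.4}) as in Proposition \ref{chi2}. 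Finally $w_3$ becomes $\nnn_0$-maximal. If $w_3=v$, then (\ref{2.3}) gives $X_{-\epsilon_2+\epsilon_3}v=0$, a contradiction with the invertibility from Step 1.

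\textbf{Step 3: $m_2$.} The same strategy applies using (\ref{2.6})--(\ref{2.10}). First, (\ref{2.8}) gives $X_{-\epsilon_2+\epsilon_3}w_3=0$, hence $w_3=0$ by Step 1. Then (\ref{2.9}) and (\ref{2.10}) make $w_1$ $\nnn_0$-invariant, and (\ref{2.8}) becomes $X_{-\epsilon_2+\epsilon_3}w_2=-X_{-\epsilon_1+\epsilon_3}w_1$. Inverting $X_{-\epsilon_2+\epsilon_3}$ determines $w_2$ from $w_1$. Substituting into $X_{\epsilon_1-\epsilon_2}w_2=w_1$ and $X_{\epsilon_2-\epsilon_3}w_2=0$ should force $w_1=0$, and hence $w_2=0$.

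The main obstacle is Step 2 in the case $w_1=v$. This requires explicit computation of the $\nnn_0$-action on the nine-dimensional module $L^0_\chi(\lambda)$ in characteristic $3$. Since the module is not a baby Verma quotient in the naive basis, relation (\ref{2.14}) must be used repeatedly. I would first tabulate $X_{\epsilon_1-\epsilon_2}$ and $X_{\epsilon_2-\epsilon_3}$ on each basis vector, then solve the small linear systems weight space by weight space. These systems have at most three unknowns each, because the weights are taken mod $3$.
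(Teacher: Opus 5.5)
\paragraph{A gap in Step 2.} Your route is the paper's: use Propositions \ref{m1}--\ref{m3} to show $m_1=m_2=m_3=0$. Your use of $X_{-\epsilon_2+\epsilon_3}^3=1$ works well for $m_3$ and for the last case ($w_3$) of $m_1$. It also works for $m_2$, and there it is even simpler than you suggest: once (\ref{2.8}) gives $w_3=0$, the last equation of (\ref{2.10}) gives $w_2=0$, and then (\ref{2.9}) gives $w_1=0$.

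The gap is in Step 2. The expectation that the $\nnn_0$-invariants of $L^0_\chi(\lambda)$ are spanned by $v$ is false. The vector $X_{-\epsilon_2+\epsilon_3}v$ is nonzero, since $X_{-\epsilon_2+\epsilon_3}^3=1$. It has weight $(1,1)$. It is killed by $X_{\epsilon_1-\epsilon_2}$, because $[X_{\epsilon_1-\epsilon_2},X_{-\epsilon_2+\epsilon_3}]=0$. It is killed by $X_{\epsilon_2-\epsilon_3}$, because $X_{\epsilon_2-\epsilon_3}X_{-\epsilon_2+\epsilon_3}v=H_{\epsilon_2-\epsilon_3}v=0$.

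Simplicity does not rule this out; it forces it. Since $(1,1)\in W\cdot(0,0)$, we have $L^0_\chi(0,0)\cong L^0_\chi(1,1)$, so there must be an invariant of weight $(1,1)$. In fact the invariants are exactly $\K v\oplus\K X_{-\epsilon_2+\epsilon_3}v$.

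Consequences:
\begin{itemize}
\item Your case splits ``$w_1\in\{0,v\}$'' and ``$w_2\in\{0,v\}$'' omit two cases: $w_1=X_{-\epsilon_2+\epsilon_3}v$, and $w_1=0$ with $w_2=X_{-\epsilon_2+\epsilon_3}v$.
\item Your opening claim that $v$ is the only maximal vector is false. This part is harmless: what you need is that every maximal vector lies in $1\otimes L^0_\chi(\lambda)$, and any nonzero vector there generates $K_\chi(\lambda)$.
\end{itemize}
The paper's written proof makes the same assumption (``the unique nonzero maximal vector of $L^0_\chi(\lambda)$''), so following it does not close the gap.

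\paragraph{Closing the two missing cases.} Both close quickly with your invertibility idea.

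For $w_1$, use a $\ggg_{+1}$-condition that Proposition \ref{m1} does not list. One computes
$[X_{2\epsilon_2},X_{-\epsilon_1-\epsilon_2}]=X_{-\epsilon_1+\epsilon_2}$, $[X_{2\epsilon_2},X_{-\epsilon_1-\epsilon_3}]=0$, and $[X_{2\epsilon_2},X_{-\epsilon_2-\epsilon_3}]=-X_{\epsilon_2-\epsilon_3}$.
Hence $X_{2\epsilon_2}m_1=X_{-\epsilon_1+\epsilon_2}w_1-X_{\epsilon_2-\epsilon_3}w_3$. Together with (\ref{2.5}) this forces $X_{-\epsilon_1+\epsilon_2}w_1=0$, so $w_1=0$ because $X_{-\epsilon_1+\epsilon_2}^3=1$.

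Alternatively, stay within the listed conditions. For $w_1=X_{-\epsilon_2+\epsilon_3}v$, conditions (\ref{2.4})--(\ref{2.5}) force $w_2=X^2_{-\epsilon_2+\epsilon_3}v$ and $w_3=av-X_{-\epsilon_1+\epsilon_3}X_{-\epsilon_2+\epsilon_3}v$ for some $a\in\K$. Then $X_{\epsilon_2-\epsilon_3}w_3=-X_{-\epsilon_1+\epsilon_2}X_{-\epsilon_2+\epsilon_3}v\neq0$, a contradiction.

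For $w_2=X_{-\epsilon_2+\epsilon_3}v$, condition (\ref{2.3}) and invertibility give $w_3=-X_{-\epsilon_1+\epsilon_3}v$. Then $X_{\epsilon_2-\epsilon_3}w_3=-X_{-\epsilon_1+\epsilon_2}v\neq0$, contradicting (\ref{2.5}).

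\paragraph{Two smaller points.}
\begin{itemize}
\item The case $w_2=v$ is not ruled out by (\ref{2.4}) ``as in Proposition \ref{chi2}''. Here $X_{\epsilon_1-\epsilon_2}w_3=v$ is solvable by $-X_{-\epsilon_1+\epsilon_3}X^2_{-\epsilon_2+\epsilon_3}v$, because $X^3_{-\epsilon_2+\epsilon_3}=1$. The contradiction comes from (\ref{2.5}), as in the paper.
\item Do not rewrite with the relation printed before the proposition. Multiply $X^2_{\epsilon_3-\epsilon_2}X_{\epsilon_2-\epsilon_1}v=v-X_{\epsilon_3-\epsilon_1}X_{\epsilon_3-\epsilon_2}v$ by $X_{\epsilon_3-\epsilon_2}$ and then apply $X_{\epsilon_1-\epsilon_2}$: the left side gives $H_{\epsilon_1-\epsilon_2}v=0$, the right side gives $v$, so $0=v$. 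Comparing weights, applying $X_{\epsilon_1-\epsilon_2}$, and using $X_{-\epsilon_1+\epsilon_2}^3=1$ instead gives $X_{-\epsilon_1+\epsilon_2}v=X_{-\epsilon_2+\epsilon_3}v$. None of the fixes above depends on this relation.
\end{itemize}
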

\begin{proof}
Let $m_1, m_2, m_3$ be $\bbb$-maximal vectors given in Propositions \ref{m1}, \ref{m2} and \ref{m3} . Let $v$ be the unique nonzero maximal vector of $L_{\chi}^0(\lambda)$ up to a scalar.
By (\ref{2.4}) and (\ref{2.5}), $X_{\epsilon_1-\epsilon_2}w_1=X_{\epsilon_2-\epsilon_3}w_1=0$. So $w_1$ is $\nnn_0$-maximal. Suppose $w_1\neq 0$. We may assume $w_1=v$. By (\ref{2.1}) and (\ref{2.2}), the weight of $w_2$ is $(1,1)$. Let
$$w_2=k_{0,1,0}X_{-\epsilon_2+\epsilon_3}v+k_{2,0,0}X_{-\epsilon_1+\epsilon_3}^2v+k_{1,2,0}X_{-\epsilon_1+\epsilon_3}X_{-\epsilon_2+\epsilon_3}^2v.$$
By (\ref{2.4}),
\begin{eqnarray*}
X_{\epsilon_1-\epsilon_2}w_2=k_{2,0,0}X_{-\epsilon_1+\epsilon_3}X_{-\epsilon_2+\epsilon_3}v-k_{1,2,0}v=0.
\end{eqnarray*}
So $k_{2,0,0}=k_{1,2,0}=0$.
As
\begin{eqnarray*}
X_{\epsilon_2-\epsilon_3}w_2=0\neq v.
\end{eqnarray*}
It contradicts (\ref{2.5}), so $$w_1=0.$$
Since $w_1=0$, by (\ref{2.4}) and (\ref{2.5}), $w_2$ is a nonzero multiple of $v$ if $w_2\neq 0$. We may assume $w_2=v$. By (\ref{2.1}) and (\ref{2.2}), the weight of $w_3$ is $(1,1)$. Then
$$w_3=k_{0,1,0}X_{-\epsilon_2+\epsilon_3}v+k_{2,0,0}X_{-\epsilon_1+\epsilon_3}^2v+k_{1,2,0}X_{-\epsilon_1+\epsilon_3}X_{-\epsilon_2+\epsilon_3}^2v.$$
By (\ref{2.4}),
$$X_{\epsilon_1-\epsilon_2}w_3=v=k_{2,0,0}X_{-\epsilon_1+\epsilon_3}X_{-\epsilon_2+\epsilon_3}v-k_{1,2,0}v.$$
So $k_{2,0,0}=0, k_{1,2,0}=-1$.
As
\begin{eqnarray*}
&&X_{\epsilon_2-\epsilon_3}w_3\\
&=&X_{-\epsilon_1+\epsilon_3}X_{-\epsilon_2+\epsilon_3}v-X_{-\epsilon_2+\epsilon_3}^2X_{-\epsilon_1+\epsilon_2}v\\
&\overset{(\ref{2.14})}{=}&-v+2X_{-\epsilon_1+\epsilon_3}X_{-\epsilon_2+\epsilon_3}v\neq 0.
\end{eqnarray*}
It contradicts (\ref{2.5}), so $$w_2=0.$$
By (\ref{2.4}) and (\ref{2.5}), $w_3$ is a nonzero multiple of $v$ if $w_3\neq 0$. We may assume $w_3=v$. It contradicts (\ref{2.3}). So $w_1=w_2=w_3=0$ and $$m_1=0.$$
Applying a similar argument, by (\ref{2.6})-(\ref{2.12}), it can be checked that $m_2=m_3=0$.

\end{proof}

Let $\chi=\chi_5$ be subregular nilpotent, and let $s_2=s_{\epsilon_2-\epsilon_3}$ be the simple reflection in $W_{I}=S_2$. Then we have the
following $W_I$-orbits in $\Lambda_{\chi}=\Lambda_{0}$.

(1) $\lambda_1=(0,0)$, $W_{I}\cdot\lambda_1=\{(0,0), (1,1)\}$.

(2) $\lambda_2=(2,2)$, $W_{I}\cdot\lambda_2=\{(2,2)\}$.

(3) $\lambda_3=(1,0)$, $W_{I}\cdot\lambda_3=\{(1,0), (2,1)\}$.

(4) $\lambda_4=(0,1)$, $W_{I}\cdot\lambda_4=\{(0,1), (2,0)\}$.

(5) $\lambda_5=(1,2)$, $W_{I}\cdot\lambda_5=\{(1,2)\}$.

(6) $\lambda_6=(0,2)$, $W_{I}\cdot\lambda_6=\{(0,2)\}$.

According to \cite[C.3]{J04}, $K_{\chi}(\lambda)\cong K_{\chi}(w\cdot\lambda)$ for $w\in W_{I}$. So to classify the simple $U_{\chi}(\ggg)$-modules, we just need to calculate the composition factors of $K_{\chi}(\lambda)$. Note that there is a typical weight in the $W_{I}$-orbit except for $\lambda=(0,0),(0,1),(0,2)$.

\begin{prop}\label{chi5.1}
Let $\chi=\chi_5$ be subregular nilpotent and $\lambda\in\Lambda_{\chi}$. Suppose $\lambda=(0,0)$. Then
$$[K_{\chi}(\lambda)]=[L_{\chi}(\lambda)].$$
\end{prop}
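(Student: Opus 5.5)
We follow the strategy of Propositions \ref{chi2} and \ref{chi6}. By \cite[Proposition 2.2]{R26} it suffices to show that the homogeneous maximal vectors $m_1,m_2,m_3$ of Propositions \ref{m1}, \ref{m2} and \ref{m3} all vanish. Then the only maximal vector of $K_{\chi}(\lambda)$, up to a scalar, is the maximal vector $v$ of degree $0$, and $K_{\chi}(\lambda)$ has no composition factor other than $L_{\chi}(\lambda)$.

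The first step is to describe $L^0_{\chi}(\lambda)$ for $\chi=\chi_5$ and $\lambda=(0,0)$ using \cite{XW17}. Here $\chi(X_{-\epsilon_2+\epsilon_3})=1$, and the $W_I$-orbit is $\{(0,0),(1,1)\}$. So, as in the case of $\chi_6$, $Z^0_{\chi}(\lambda)$ is not simple. I would write down the maximal vector $v'$ of $Z^0_{\chi}(\lambda)$ given in the proof of \cite{XW17}. From it I would extract a basis of $L^0_{\chi}(\lambda)$, presumably a set of monomials $X_{-\epsilon_1+\epsilon_3}^cX_{-\epsilon_2+\epsilon_3}^bX_{-\epsilon_1+\epsilon_2}^a v$ with $a$ restricted. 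I would also record the relations, analogous to (\ref{2.14}), that express the remaining monomials in terms of this basis. Finally I would tabulate the actions of $X_{\epsilon_1-\epsilon_2}$ and $X_{\epsilon_2-\epsilon_3}$ on the basis vectors of the relevant weights. Note that weights are now only defined modulo $3$ and $\chi$ is nilpotent, so the weight-space bookkeeping $2c+b\equiv\ldots$ must be done mod $3$, including the $a$-exponent.

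Next comes the case analysis for $m_1$. By (\ref{2.4}) and (\ref{2.5}), $w_1$ is $\nnn_0$-maximal, so $w_1\in\K v$. If $w_1=v$, then (\ref{2.1}) and (\ref{2.2}) force $w_2$ to have weight $(1,1)$. I expand $w_2$ in the basis and impose $X_{\epsilon_1-\epsilon_2}w_2=0$ and $X_{\epsilon_2-\epsilon_3}w_2=v$. I expect these to be inconsistent, as in the $\chi_6$ case, because $X_{\epsilon_2-\epsilon_3}$ kills or shifts the weight-$(1,1)$ vectors away from $v$; this gives $w_1=0$. Then $w_2\in\K v$. If $w_2=v$, then $w_3$ has weight $(1,1)$, and I solve $X_{\epsilon_1-\epsilon_2}w_3=v$ and show $X_{\epsilon_2-\epsilon_3}w_3\neq 0$ using the straightening relation, so $w_2=0$. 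Finally $w_3\in\K v$, and (\ref{2.3}) gives $X_{-\epsilon_2+\epsilon_3}v=0$ when $w_3=v$, which fails in $L^0_\chi(\lambda)$. Hence $m_1=0$. I would treat $m_2$ by the same descending argument using (\ref{2.6})--(\ref{2.10}), starting now from $w_3$ and $w_1$, which are forced to be $\nnn_0$-maximal by (\ref{2.9}) and (\ref{2.10}). For $m_3$, (\ref{2.12}) requires $w$ to be $\nnn_0$-maximal with $X_{-\epsilon_2+\epsilon_3}w=0$, which is impossible for $w\in\K v$ nonzero.

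The main obstacle is the first step. Unlike the $\chi_6$ case, the paper has not yet listed the basis of $L^0_{\chi_5}(0,0)$ or its straightening relations, and the whole computation hinges on correctly reducing monomials involving $X_{-\epsilon_1+\epsilon_2}$, whose $p$-th power is now $0$ since $\chi(X_{-\epsilon_1+\epsilon_2})=0$, together with $X_{-\epsilon_2+\epsilon_3}^3=1$. I would derive these by computing $U(\ggg_{\0})v'$ directly, first checking whether $v'$ is a combination like $X_{-\epsilon_1+\epsilon_2}v+(\text{terms in }X_{-\epsilon_2+\epsilon_3})$ so that $X_{-\epsilon_1+\epsilon_2}$ can be eliminated in $L^0_\chi(\lambda)$. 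Once that relation is available, each remaining step is a finite linear-algebra check.
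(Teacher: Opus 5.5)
\paragraph{Gap: the step $w_1\in\K v$ fails, and the omitted case yields a maximal vector.} Your plan follows the paper's argument step for step. The problem is the reduction ``$w_1$ is $\nnn_0$-maximal, so $w_1\in\K v$'', and the same claim for $w_2$ and $w_3$ further down. This is false for this $\chi$ and $\lambda$, and the case it discards is not empty.

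Because $\lambda(H_{\epsilon_2-\epsilon_3})=0$, the vector $u=X_{-\epsilon_2+\epsilon_3}v$ satisfies $X_{\epsilon_2-\epsilon_3}u=H_{\epsilon_2-\epsilon_3}v=0$ and $X_{\epsilon_1-\epsilon_2}u=0$. Moreover $u\neq 0$, since $X_{-\epsilon_2+\epsilon_3}^3=1$ in $U_\chi$. So $L^0_\chi(\lambda)$ has a second $\nnn_0$-maximal vector, of weight $(1,1)=s_2\cdot\lambda$; this is exactly what the orbit $\{(0,0),(1,1)\}$ you mention signals. It also shows your opening reformulation is off: $v$ is not the only maximal vector of $K_\chi(\lambda)$ even in degree $0$. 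The right criterion is that every maximal vector lies in $1\otimes L^0_\chi(\lambda)$.

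The fact you flagged as the main obstacle is simple: $X_{-\epsilon_1+\epsilon_2}v=0$ in $L^0_\chi(\lambda)$. Since $\lambda(H_{\epsilon_1-\epsilon_2})=0$, the vector $X_{-\epsilon_1+\epsilon_2}v$ is $\bbb_0$-primitive in $Z^0_\chi(\lambda)$. It generates $U_\chi(\nnn_0^-)X_{-\epsilon_1+\epsilon_2}v$, which cannot contain $v$ because $X_{-\epsilon_1+\epsilon_2}$ is nilpotent, hence not left invertible, in $U_\chi(\nnn_0^-)$. So this submodule is proper and $X_{-\epsilon_1+\epsilon_2}v$ vanishes in the simple quotient.

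\paragraph{The extra maximal vector.} Run your descending analysis with $w_1=0$ and $w_2=u$. Then (\ref{2.4}), (\ref{2.5}) and the weight force $w_3=-X_{-\epsilon_1+\epsilon_3}v$. Using
\[
[X_{\epsilon_1-\epsilon_2},X_{-\epsilon_1+\epsilon_3}]=-X_{-\epsilon_2+\epsilon_3},\quad [X_{\epsilon_2-\epsilon_3},X_{-\epsilon_1+\epsilon_3}]=X_{-\epsilon_1+\epsilon_2},\quad [X_{-\epsilon_2+\epsilon_3},X_{-\epsilon_1+\epsilon_3}]=0
\]
together with $X_{-\epsilon_1+\epsilon_2}v=0$, all of (\ref{2.1})--(\ref{2.5}) hold with $(\mu_1,\mu_2)=(0,2)$. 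Hence
\[
m_1=X_{-\epsilon_1-\epsilon_3}X_{-\epsilon_2+\epsilon_3}v-X_{-\epsilon_2-\epsilon_3}X_{-\epsilon_1+\epsilon_3}v\neq 0
\]
is a maximal vector. Since $\ggg_{+1}$ kills it, $U_\chi(\ggg)m_1\subseteq\bigoplus_{i\geq 1}\wedge^i(\ggg_{-1})\otimes L^0_\chi(\lambda)$, so it generates a proper nonzero submodule. Through $L^0_\chi(0,0)\cong L^0_\chi(1,1)$, this is the exact analogue of the vector the paper finds in case (I.2) for $\chi=0$. Thus the case analysis cannot end with $m_1=0$, and no completion of this plan proves $[K_\chi(\lambda)]=[L_\chi(\lambda)]$. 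In fact $K_\chi(\lambda)$ is not simple.

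The paper's own proof makes the same omission. Its displayed relation $X^2_{-\epsilon_2+\epsilon_3}X_{-\epsilon_1+\epsilon_2}v=-X_{-\epsilon_1+\epsilon_3}X_{-\epsilon_2+\epsilon_3}v$ is also inconsistent with $X_{-\epsilon_1+\epsilon_2}v=0$, though the subcase $w_2=v$ is still excluded once the correct relation is used.
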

\begin{proof}
By the proof of \cite[Theorem 2]{XW17},
$$L_{\chi}^0(\lambda)=Span_{\K}\{X_{-\epsilon_1+\epsilon_3}^cX_{-\epsilon_2+\epsilon_3}^bv|0\leq b,c\leq 2\},$$
and it satisfies
\begin{gather}
X_{-\epsilon_2+\epsilon_3}^2X_{-\epsilon_1+\epsilon_2}v=-X_{-\epsilon_1+\epsilon_3}X_{-\epsilon_2+\epsilon_3}v.\label{2.14}
\end{gather}
Suppose $m_1, m_2, m_3$, given as in Propositions \ref{m1}, \ref{m2} and \ref{m3}, are $\bbb$-maximal vectors. Let $v$ be a nonzero maximal vector of $L_{\chi}^0(\lambda)$.
By (\ref{2.4}) and (\ref{2.5}), $w_1$ is $\nnn_0$-maximal. Suppose $w_1\neq 0$. We may assume $w_1=v$. Then
$$w_2=k_{0,1,0}X_{-\epsilon_2+\epsilon_3}v+k_{2,0,0}X_{-\epsilon_1+\epsilon_3}^2v+k_{1,2,0}X_{-\epsilon_1+\epsilon_3}X_{-\epsilon_2+\epsilon_3}^2v.$$
By (\ref{2.4}),
\begin{eqnarray*}
X_{\epsilon_1-\epsilon_2}w_2=k_{2,0,0}X_{-\epsilon_1+\epsilon_3}X_{-\epsilon_2+\epsilon_3}v-k_{1,2,0}v=0.
\end{eqnarray*}
So $k_{2,0,0}=k_{1,2,0}=0$.
Since
\begin{eqnarray*}
X_{\epsilon_2-\epsilon_3}w_2=0\neq v,
\end{eqnarray*}
it contradicts (\ref{2.5}). So $$w_1=0.$$
As $w_1=0$, by (\ref{2.4}) and (\ref{2.5}), $w_2$ is a nonzero multiple of $v$ if $w_2\neq 0$. We may assume $w_2=v$. By (\ref{2.1}) and (\ref{2.2}), the weight of $w_3$ is $(1,1)$. Then
$$w_3=k_{0,1,0}X_{-\epsilon_2+\epsilon_3}v+k_{2,0,0}X_{-\epsilon_1+\epsilon_3}^2v+k_{1,2,0}X_{-\epsilon_1+\epsilon_3}X_{-\epsilon_2+\epsilon_3}^2v.$$
By (\ref{2.4}),
$$X_{\epsilon_1-\epsilon_2}w_3=v=k_{2,0,0}X_{-\epsilon_1+\epsilon_3}X_{-\epsilon_2+\epsilon_3}v-k_{1,2,0}v.$$
So $k_{2,0,0}=0, k_{1,2,0}=-1$.
As
\begin{eqnarray*}
&&X_{\epsilon_2-\epsilon_3}w_3\\
&=&X_{-\epsilon_1+\epsilon_3}X_{-\epsilon_2+\epsilon_3}v-X_{-\epsilon_2+\epsilon_3}^2X_{-\epsilon_1+\epsilon_2}v\\
&\overset{(\ref{2.14})}{=}&2X_{-\epsilon_1+\epsilon_3}X_{-\epsilon_2+\epsilon_3}v\neq 0.
\end{eqnarray*}
It contradicts (\ref{2.5}), so $$w_2=0.$$
By (\ref{2.4}) and (\ref{2.5}), $w_3$ is a nonzero multiple of $v$ if $w_3\neq 0$. We may assume $w_3=v$. It contradicts (\ref{2.3}). So $w_1=w_2=w_3=0$ and $$m_1=0.$$
Applying a similar argument, by (\ref{2.6})-(\ref{2.12}), it can be checked that $m_2=m_3=0$.

\end{proof}
\begin{prop}\label{chi5.2}
Let $\chi=\chi_5$ be subregular nilpotent and $\lambda\in\Lambda_{\chi}$. Suppose  $\lambda=(0,1), (2,0)$. Then
$$[K_{\chi}(\lambda)]=[L_{\chi}(\lambda)]+[L_{\chi}((\lambda-\epsilon_1-\epsilon_2)].$$
\end{prop}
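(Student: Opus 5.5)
Since $K_{\chi}(\lambda)\cong K_{\chi}(s_2\cdot\lambda)$ and $(2,0)=s_2\cdot(0,1)$, it suffices to treat $\lambda=(0,1)$. The plan follows the proofs of Propositions \ref{chi2} and \ref{chi5.1}: we determine all homogeneous $\bbb$-maximal vectors of $K_{\chi}(\lambda)$ using Propositions \ref{m1}, \ref{m2} and \ref{m3}. We will show that, up to scalars, there are exactly two: the generator $v$ (of degree $0$) and one vector $m_1$ of degree $1$, of weight $\lambda-\epsilon_1-\epsilon_2$.

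First we need the explicit structure of $L^0_{\chi}(\lambda)$ from the proof of \cite[Theorem 2]{XW17}. We expect a basis of the form $X_{-\epsilon_1+\epsilon_3}^cX_{-\epsilon_2+\epsilon_3}^bv$ (with the relevant range of $b,c$), together with a relation expressing $X_{-\epsilon_2+\epsilon_3}^2X_{-\epsilon_1+\epsilon_2}v$ in terms of this basis, analogous to the relation used in Proposition \ref{chi5.1}. We also need the action of $X_{\epsilon_1-\epsilon_2}$ and $X_{\epsilon_2-\epsilon_3}$ on these basis vectors, computed from the commutation relations with $r=0$, $s=1$.

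For degree one, we proceed as in Proposition \ref{chi2}. By (\ref{2.4}) and (\ref{2.5}), $w_1$ is $\nnn_0$-maximal, so we put $w_1=v$. The weight condition gives $2c+b\equiv 1$, so
$$w_2=k_{0,1,0}X_{-\epsilon_2+\epsilon_3}v+k_{2,0,0}X_{-\epsilon_1+\epsilon_3}^2v+k_{1,2,0}X_{-\epsilon_1+\epsilon_3}X_{-\epsilon_2+\epsilon_3}^2v.$$
Condition (\ref{2.4}) should kill $k_{2,0,0}$ and $k_{1,2,0}$. Condition (\ref{2.5}) then reads $X_{\epsilon_2-\epsilon_3}w_2=k_{0,1,0}\,s\,v=v$, so with $s=1$ we get $k_{0,1,0}=1$. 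This is where $\lambda=(0,1)$ differs from the case $(0,0)$ of Proposition \ref{chi5.1}, in which $s=0$ made (\ref{2.5}) impossible.

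Next we solve for $w_3$ of weight $(-1,0)$. Condition (\ref{2.4}) should force $w_3=-X_{-\epsilon_1+\epsilon_3}v+t\,X_{-\epsilon_2+\epsilon_3}^2v$. Condition (\ref{2.5}) gives $(2s-2)t=0$, which holds for every $t$ when $s=1$. So $t$ must be determined, or shown to be irrelevant, using (\ref{2.3}), i.e. $X_{-\epsilon_1+\epsilon_3}w_2+X_{-\epsilon_2+\epsilon_3}w_3=0$. This requires care because $\chi_5(X_{-\epsilon_2+\epsilon_3})=1$, so $X_{-\epsilon_2+\epsilon_3}^3$ acts as the scalar $1$ on $L^0_{\chi}(\lambda)$, not as $0$. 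We then verify that the resulting
$$m_1=X_{-\epsilon_1-\epsilon_2}v+X_{-\epsilon_1-\epsilon_3}X_{-\epsilon_2+\epsilon_3}v+X_{-\epsilon_2-\epsilon_3}w_3$$
satisfies (\ref{2.1})--(\ref{2.5}).

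For the cases $w_1=0$, we argue exactly as before. Taking $w_2=v$ leads to a contradiction with (\ref{2.4}) through the $X_{\epsilon_1-\epsilon_2}$-action on $w_3$. Taking $w_3=v$ contradicts (\ref{2.3}).

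For degrees two and three, we use (\ref{2.6})--(\ref{2.12}). For $m_3$, the vector $w$ must be annihilated by $X_{-\epsilon_2+\epsilon_3}$, which is impossible since $X_{-\epsilon_2+\epsilon_3}^3$ acts as $1$; hence $m_3=0$. For $m_2$, (\ref{2.8}) forces $X_{-\epsilon_2+\epsilon_3}w_3=0$, hence $w_3=0$ by the same reason. Then $w_2$ is $\nnn_0$-maximal and we solve (\ref{2.8})--(\ref{2.10}), expecting a contradiction as in the earlier propositions.

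Conclusion: the only maximal vectors are $v$ and $m_1$. The vector $m_1$ generates a proper submodule, and its image should be simple, since it has no further maximal vectors. This gives the two composition factors stated in the proposition.

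The main obstacle is getting the correct explicit $U_{\chi}(\ggg_{\0})$-structure of $L^0_{\chi}(0,1)$ from \cite{XW17}: its dimension, its basis, and the relations involving $X_{-\epsilon_2+\epsilon_3}^3=1$. The step that fixes $w_3$ through (\ref{2.3}), and the exclusion of $m_2$, both depend on these relations.
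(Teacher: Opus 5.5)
Your approach is the paper's: it says the argument parallels Proposition \ref{chi2} and records the same $m_1$. Your $w_1=v$ computation is correct. After (\ref{2.4}) and (\ref{2.5}) give $w_2=X_{-\epsilon_2+\epsilon_3}v$, condition (\ref{2.3}) reduces to $t\,X_{-\epsilon_2+\epsilon_3}^3v=t\,v=0$, because $X_{-\epsilon_1+\epsilon_3}$ and $X_{-\epsilon_2+\epsilon_3}$ commute. So $t=0$.

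The case $w_1=0$, $w_2=v$ fails as you argue it. The contradiction with (\ref{2.4}) in Proposition \ref{chi2} came from $X_{-\epsilon_2+\epsilon_3}^3$ acting by $0$. For $\chi_5$ one has $X_{\epsilon_1-\epsilon_2}w_3=t_{2,0,0}X_{-\epsilon_1+\epsilon_3}X_{-\epsilon_2+\epsilon_3}v-t_{1,2,0}v$. So (\ref{2.4}) is solvable, with $t_{2,0,0}=0$ and $t_{1,2,0}=-1$, exactly as in Propositions \ref{chi5.1} and \ref{chi5.3}. The contradiction must come from (\ref{2.5}) instead. For that you need $X_{-\epsilon_1+\epsilon_2}v=0$ in $L^0_\chi(0,1)$: since $r=0$, this vector is singular in $Z^0_\chi(\lambda)$ and generates a proper submodule, which is what $L^0_\chi(\lambda)\cong U_\chi(\mathfrak u)\otimes\K_\lambda$ encodes. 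With this and $s=1$ you get $X_{\epsilon_2-\epsilon_3}w_3=t_{0,1,0}v-X_{-\epsilon_1+\epsilon_3}X_{-\epsilon_2+\epsilon_3}v\neq0$. The same relation is what makes your $(2s-2)t$ formula valid.

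For $m_2$ no further search is needed. Once $w_3=0$, (\ref{2.10}) gives $w_2=0$, and (\ref{2.9}) then gives $w_1=0$. Note that $w_2$ is not $\nnn_0$-maximal a priori, since $X_{\epsilon_1-\epsilon_2}w_2=w_1$.

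There are three further gaps, all shared with the paper's own argument.

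(i) When $w_1=0$, $w_2$ need not be a multiple of $v$. Since $L^0_\chi(0,1)\cong L^0_\chi(2,0)$, its $\nnn_0$-invariants are spanned by $v$ and $X_{-\epsilon_2+\epsilon_3}^2v$. The choice $w_2=X_{-\epsilon_2+\epsilon_3}^2v$, $w_3=-X_{-\epsilon_1+\epsilon_3}X_{-\epsilon_2+\epsilon_3}v$ solves (\ref{2.1})--(\ref{2.5}); it is $X_{-\epsilon_2+\epsilon_3}m_1$, of weight $(1,1)$. The option $w_1=X_{-\epsilon_2+\epsilon_3}^2v$ must also be excluded. It is: (\ref{2.4}) forces $w_2\in\K v$, and then (\ref{2.5}) fails. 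So ``exactly two maximal vectors'' is false. This is harmless, because $X_{-\epsilon_2+\epsilon_3}^3=1$ makes the extra vectors generate the same submodules as $v$ and $m_1$.

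(ii) Knowing all maximal vectors only shows that $U(\ggg)m_1$ is simple and is the unique minimal submodule of the radical. To get exactly two composition factors you still need the radical to equal $U(\ggg)m_1$, for example by a dimension count.

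(iii) The reduction to $(0,1)$ does not give the displayed formula for $\lambda=(2,0)$, because $\lambda\mapsto\lambda-\epsilon_1-\epsilon_2$ does not commute with the dot action. Transporting the $(0,1)$ result gives $[L_\chi(2,0)]+[L_\chi(0,0)]$, with $L_\chi(0,0)\cong L_\chi(1,1)=L_\chi((2,0)-\epsilon_1-\epsilon_3)$. But $(2,0)-\epsilon_1-\epsilon_2=(2,2)$ is typical, so $L_\chi(2,2)=K_\chi(2,2)$ has dimension $8\dim L^0_\chi(2,2)\ge72=\dim K_\chi(2,0)$ and cannot occur as a factor. The shift has to be read relative to the representative $(0,1)$.
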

\begin{proof}
Note that $(0,1)$ and $(2,0)$ are in the same $W_{I}$-orbit. We only need to consider the case $\lambda=(0,1).$
By the proof of \cite[Theorem 2]{XW17},
$$L^0_{\chi}(\lambda)\cong U_{\chi}(\mathfrak{u})\otimes \K_{\lambda},$$
where $\mathfrak{u}$ is the subalgebra of $\ggg_{\0}$ generated by $X_{-\epsilon_1+\epsilon_3}$ and $X_{-\epsilon_2+\epsilon_3}$.
The proof is similar to the proof of Proposition \ref{chi2} and we omit it here. It can be checked that $v$ and $X_{-\epsilon_1-\epsilon_2}v+X_{-\epsilon_1-\epsilon_3}X_{-\epsilon_2+\epsilon_3}v-X_{-\epsilon_2-\epsilon_3}X_{-\epsilon_1+\epsilon_3}v$ are the only two maximal vectors of $K_{\chi}(\lambda)$ up to a scalar.

\end{proof}

\begin{prop}\label{chi5.3}
Let $\chi=\chi_5$ be subregular nilpotent and $\lambda\in\Lambda_{\chi}$. Suppose $\lambda=(0,2)$. Then
$$[K_{\chi}(\lambda)]=[L_{\chi}(\lambda)]+[L_{\chi}((\lambda-\epsilon_1-\epsilon_2)]+[L_{\chi}((\lambda-\epsilon_1-\epsilon_3)].$$
\end{prop}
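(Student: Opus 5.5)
I would follow the template of the proofs of Propositions \ref{chi2}, \ref{chi5.1} and \ref{chi5.2}: determine all homogeneous $\bbb$-maximal vectors of $K_{\chi}(\lambda)$ using Propositions \ref{m1}, \ref{m2} and \ref{m3}. Since a vector is maximal exactly when its homogeneous components are, the composition factors of $K_{\chi}(\lambda)$ are read off from the maximal vectors modulo the submodules they generate. First I need the structure of $L^0_{\chi}(\lambda)$ for $\chi=\chi_5$ and $\lambda=(0,2)$. The orbit $W_I\cdot\lambda=\{(0,2)\}$ is a singleton, and $\lambda(H_{\epsilon_2-\epsilon_3})=2=p-1$. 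So by the proof of \cite[Theorem 2]{XW17} I expect $Z^0_{\chi}(\lambda)$ to be irreducible, or at least $L^0_{\chi}(\lambda)\cong U_{\chi}(\mathfrak{u})\otimes\K_{\lambda}$ with basis $X_{-\epsilon_1+\epsilon_3}^cX_{-\epsilon_2+\epsilon_3}^bv$. I will also need the relation expressing $X_{-\epsilon_1+\epsilon_2}v$ in this basis, which is what feeds into conditions (\ref{2.4}) and (\ref{2.5}).

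For degree one I will solve (\ref{2.1})--(\ref{2.5}) exactly as in Proposition \ref{chi2}, now with $s=2$, so $1/s=-1$. Taking $w_1=v$ forces $w_2$ to have weight $(1,0)$ and hence to be a combination of $X_{-\epsilon_2+\epsilon_3}v$, $X_{-\epsilon_1+\epsilon_3}^2v$ and $X_{-\epsilon_1+\epsilon_3}X_{-\epsilon_2+\epsilon_3}^2v$. Condition (\ref{2.4}) kills the last two terms, and (\ref{2.5}) gives $w_2=\frac1sX_{-\epsilon_2+\epsilon_3}v$ because $s=2\neq0$. Then $w_3=-\frac1sX_{-\epsilon_1+\epsilon_3}v$ plus a possible $X^2_{-\epsilon_2+\epsilon_3}v$ term, whose coefficient is killed by the factor $2s-2=2\neq0$. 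This produces a maximal vector $m_1$ of weight $\lambda-\epsilon_1-\epsilon_2$, after checking (\ref{2.3}). The cases $w_1=0$ are ruled out as in Proposition \ref{chi2}.

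The new feature, and the source of the third factor, should be in degree two. Here I solve (\ref{2.6})--(\ref{2.10}) and expect a nonzero solution with $w_1=v$, giving a maximal vector $m_2$ of weight $\lambda-\epsilon_1-\epsilon_3$ (the weight of $X_{-\epsilon_1-\epsilon_3}X_{-\epsilon_1-\epsilon_2}v$ shifted appropriately). The reason it exists only at $s=2$ is that the coefficient $2s-1$ (or $2s-2$) that obstructed solutions in Proposition \ref{chi2} now behaves differently, namely $2s-1=0$ in $\bbf_3$. Concretely, I take $w_2$ and $w_3$ to be combinations of basis vectors of the forced weights. Then (\ref{2.9}) and (\ref{2.10}) determine the coefficients, and I verify (\ref{2.8}). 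Finally I show $m_3=0$, because (\ref{2.12}) would force $w$ to be a vector killed by $X_{-\epsilon_1+\epsilon_3}$ and $X_{-\epsilon_2+\epsilon_3}$, which is impossible in the free $U_{\chi}(\mathfrak u)$-module, and because $w$ must also be $\nnn_0$-maximal of weight $\lambda$.

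To conclude, I still need to know that these three maximal vectors give three distinct composition factors, each with multiplicity one. I would argue this by dimension and by the degree filtration of $\wedge(\ggg_{-1})$. The submodule generated by $m_2$ lies in degrees $\geq2$ and does not contain $m_1$. The submodule generated by $m_1$ does not contain $v$. The quotients are then simple, because the quotient by the submodule generated by $m_1$ has no maximal vectors other than the images of $v$ and $m_2$, and a similar statement holds for the other quotient.

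I expect the main obstacle to be the degree-two computation. One has to get the commutators of $X_{\epsilon_1-\epsilon_2}$ and $X_{\epsilon_2-\epsilon_3}$ with $X_{-\epsilon_1+\epsilon_3}^cX_{-\epsilon_2+\epsilon_3}^b$ right modulo $3$, and also justify that no further composition factors hide beyond the maximal vectors. Since the paper treats dimension counts implicitly, I would lean on the fact that every simple subquotient is generated by a maximal vector.
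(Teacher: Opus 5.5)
\textbf{The gap: the third factor is placed in the wrong degree.} Your degree-one computation with $w_1=v$ is correct and matches the paper: with $1/s=-1$ you get $w_2=-X_{-\epsilon_2+\epsilon_3}v$ and $w_3=X_{-\epsilon_1+\epsilon_3}v$. However, as written your plan discards the solution that actually gives the third factor.

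Your claim that the cases $w_1=0$ are ruled out ``as in Proposition \ref{chi2}'' is false for $\chi_5$.
\begin{itemize}
\item In Proposition \ref{chi2}, the case $w_2=v$ fails because $X_{\epsilon_1-\epsilon_2}X_{-\epsilon_1+\epsilon_3}X_{-\epsilon_2+\epsilon_3}^2v=-X_{-\epsilon_2+\epsilon_3}^3v=0$, since $\chi_2(X_{-\epsilon_2+\epsilon_3})=0$.
\item Here $\chi_5(X_{-\epsilon_2+\epsilon_3})=1$, so $X_{-\epsilon_2+\epsilon_3}^3$ acts as $1$ and that term equals $-v$. Hence $X_{\epsilon_1-\epsilon_2}w_3=v$ is solvable, with coefficient $-1$ on $X_{-\epsilon_1+\epsilon_3}X^2_{-\epsilon_2+\epsilon_3}v$.
\item Condition (\ref{2.5}) then reduces, via the relation $X_{-\epsilon_2+\epsilon_3}^2X_{-\epsilon_1+\epsilon_2}v=0$ valid for $\lambda=(0,2)$, to killing the coefficient of $X_{-\epsilon_2+\epsilon_3}v$.
\item Condition (\ref{2.3}) holds because $X^3_{-\epsilon_2+\epsilon_3}v=v$.
\end{itemize}
This produces the second degree-one maximal vector
\[X_{-\epsilon_1-\epsilon_3}v-X_{-\epsilon_2-\epsilon_3}X_{-\epsilon_1+\epsilon_3}X_{-\epsilon_2+\epsilon_3}^2v\]
of weight $(2,0)=\lambda-\epsilon_1-\epsilon_3$. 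This is exactly where the paper's third composition factor comes from.

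\textbf{There is no degree-two maximal vector.} Your candidate cannot even have the right weight. By (\ref{2.6})--(\ref{2.7}), $w_1=v$ forces $(\mu_1,\mu_2)=(-1,2)=(2,2)$, which is the weight of $X_{-\epsilon_1-\epsilon_3}X_{-\epsilon_1-\epsilon_2}v$. But $\lambda-\epsilon_1-\epsilon_3=(2,0)$. In fact $m_2=0$ outright:
\begin{itemize}
\item $X_{-\epsilon_2+\epsilon_3}$ is invertible on $K_\chi(\lambda)$ (it satisfies $X_{-\epsilon_2+\epsilon_3}^3=1$), so the condition $X_{-\epsilon_2+\epsilon_3}w_3=0$ in (\ref{2.8}) gives $w_3=0$.
\item Then (\ref{2.10}) gives $w_2=0$.
\item Then (\ref{2.8}) reduces to $X_{-\epsilon_1+\epsilon_3}w_1=0$. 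Since $w_1$ is $\nnn_0$-maximal by (\ref{2.9})--(\ref{2.10}), it lies in $\K v$, and so $w_1=0$.
\end{itemize}
The heuristic ``$2s-1=0$ in $\mathbb{F}_3$'' is a red herring: that coefficient multiplies $k_{1,2,0}$, which (\ref{2.4}) has already killed.

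Following your plan, you would find only $v$ and one $m_1$, i.e.\ two factors instead of three. The missing idea is that the invertibility of $X_{-\epsilon_2+\epsilon_3}$ under $\chi_5$ (absent for $\chi_2$) does two things. It creates the extra degree-one solution, and it kills all candidates in degree $\geq 2$. Your closing argument via maximal vectors of quotients is also not rigorous, since those need not lift to $K_\chi(\lambda)$, but the paper is equally implicit on that point.
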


\begin{proof}
By the proof of \cite[Theorem 2]{XW17},
$$L_{\chi}^0(\lambda)=Span_{\K}\{X_{-\epsilon_1+\epsilon_3}^cX_{-\epsilon_2+\epsilon_3}^bv|0\leq b,c\leq 2\},$$
and it satisfies
\begin{gather}
X_{-\epsilon_2+\epsilon_3}^2X_{-\epsilon_1+\epsilon_2}v=0.\label{2.16}
\end{gather}
Suppose $m_1, m_2, m_3$ are $\bbb$-maximal vectors given in Propositions \ref{m1}, \ref{m2} and \ref{m3} . Let $v$ be a nonzero maximal vector of $L_{\chi}^0(\lambda)$.
By (\ref{2.4}) and (\ref{2.5}), $w_1$ is $\nnn_0$-maximal. Suppose $w_1\neq 0$. We may assume $w_1=v$. Then
$$w_2=k_{0,1,0}X_{-\epsilon_2+\epsilon_3}v+k_{2,0,0}X_{-\epsilon_1+\epsilon_3}^2v+k_{1,2,0}X_{-\epsilon_1+\epsilon_3}X_{-\epsilon_2+\epsilon_3}^2v.$$
By (\ref{2.4}),
\begin{eqnarray*}
X_{\epsilon_1-\epsilon_2}w_2=k_{2,0,0}X_{-\epsilon_1+\epsilon_3}X_{-\epsilon_2+\epsilon_3}v-k_{1,2,0}v=0.
\end{eqnarray*}
So $k_{2,0,0}=k_{1,2,0}=0$.
By (\ref{2.5})
\begin{eqnarray*}
X_{\epsilon_2-\epsilon_3}w_2=2k_{0,1,0}v=v.
\end{eqnarray*}
So $$w_2=-X_{-\epsilon_2+\epsilon_3}v.$$
By the weight of $w_3$, we may assume
$$w_3=t_{1,0,0}X_{-\epsilon_1+\epsilon_3}v+t_{2,1,0}X_{-\epsilon_1+\epsilon_3}^2X_{-\epsilon_2+\epsilon_3}v+t_{0,2,0}X_{-\epsilon_2+\epsilon_3}^2v.$$
By (\ref{2.4}),
\begin{eqnarray*}
X_{\epsilon_1-\epsilon_2}w_3
=-t_{1,0,0}X_{-\epsilon_2+\epsilon_3}v-2t_{2,1,0}X_{-\epsilon_1+\epsilon_3}X_{-\epsilon_2+\epsilon_3}^2v
=-X_{-\epsilon_2+\epsilon_3}v.
\end{eqnarray*}
So $t_{1,0,0}=1, t_{2,1,0}=0.$
By (\ref{2.3}),
\begin{eqnarray*}
&&X_{-\epsilon_1+\epsilon_3}w_2+X_{-\epsilon_2+\epsilon_3}w_3\\
&=&-X_{-\epsilon_1+\epsilon_3}X_{-\epsilon_2+\epsilon_3}v
+X_{\epsilon_3-\epsilon_1}X_{-\epsilon_2+\epsilon_3}v+t_{0,2,0}v
=0.
\end{eqnarray*}
So $k_{0,2,0}=0$, and $$w_3=X_{-\epsilon_1+\epsilon_3}v.$$
It can be checked that $$m_1=X_{-\epsilon_1-\epsilon_2}v-X_{-\epsilon_1-\epsilon_3}X_{-\epsilon_2+\epsilon_3}v+X_{-\epsilon_2-\epsilon_3}X_{-\epsilon_1+\epsilon_3}v$$
satisfies (\ref{2.1})-(\ref{2.5}). So it is a maximal vector of $L_{\chi}(\lambda)$.

Assume $w_1=0$. By (\ref{2.4}) and (\ref{2.5}), $w_2$ is a nonzero multiple of $v$ if $w_2\neq 0$. We may assume $w_2=v$. By (\ref{2.1}) and (\ref{2.2}), the weight of $w_3$ is $(1,1)$. Then
$$w_3=k_{0,1,0}X_{-\epsilon_2+\epsilon_3}v+k_{2,0,0}X_{-\epsilon_1+\epsilon_3}^2v+k_{1,2,0}X_{-\epsilon_1+\epsilon_3}X_{-\epsilon_2+\epsilon_3}^2v.$$
By (\ref{2.4}),
$$X_{\epsilon_1-\epsilon_2}w_3=v=k_{2,0,0}X_{-\epsilon_1+\epsilon_3}X_{-\epsilon_2+\epsilon_3}v-k_{1,2,0}v.$$
So $k_{2,0,0}=0, k_{1,2,0}=-1$.
By (\ref{2.5}),
\begin{eqnarray*}
X_{\epsilon_2-\epsilon_3}w_3
=2k_{0,1,0}v-X_{-\epsilon_2+\epsilon_3}^2X_{-\epsilon_1+\epsilon_2}v
\overset{(\ref{2.16})}{=}2k_{0,1,0}v= 0.
\end{eqnarray*}
So $$w_3=-X_{-\epsilon_1+\epsilon_3}X_{-\epsilon_2+\epsilon_3}^2v.$$
By (\ref{2.3}),
\begin{eqnarray*}
X_{-\epsilon_1+\epsilon_3}w_2+X_{-\epsilon_2+\epsilon_3}w_3
=X_{-\epsilon_1+\epsilon_3}v-X_{-\epsilon_1+\epsilon_3}v=0.
\end{eqnarray*}
It can be checked that $$m_1=X_{-\epsilon_1-\epsilon_3}v-X_{-\epsilon_2-\epsilon_3}X_{-\epsilon_1+\epsilon_3}X_{-\epsilon_2+\epsilon_3}^2v$$
satisfies (\ref{2.1})-(\ref{2.5}). So it is a maximal vector of $L_{\chi}(\lambda)$.

Suppose $w_1=w_2=0$. By (\ref{2.4}) and (\ref{2.5}), $w_3$ is a nonzero multiple of $v$ if $w_3\neq 0$. We may assume $w_3=v$. It contradicts (\ref{2.3}). So $w_1=w_2=w_3=0$ and $$m_1=0.$$

Applying a similar argument, by (\ref{2.6})-(\ref{2.12}), it can be checked that $$m_2=m_3=0.$$

\end{proof}

\begin{prop}
Let $\chi=\chi_4$ and $\lambda\in\Lambda_{\chi}$. Suppose $\lambda$ is atypical. Then
$$[K_{\chi}(\lambda)]=[L_{\chi}(\lambda)].$$
\end{prop}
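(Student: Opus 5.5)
We follow the pattern of Propositions \ref{chi2}, \ref{chi6} and \ref{chi5.1}: we show that, up to a scalar, the only maximal vector of $K_{\chi}(\lambda)$ is $v\in L^0_{\chi}(\lambda)$. By \cite[Proposition 2.2]{R26} it suffices to show that every homogeneous candidate $m_1,m_2,m_3$ of Propositions \ref{m1}, \ref{m2}, \ref{m3} vanishes.

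\emph{Setting up the cases.} For $\chi=\chi_4$ we have $\chi(H_{\epsilon_1-\epsilon_2})=0$ and $\chi(H_{\epsilon_2-\epsilon_3})\neq 0$. Hence $\lambda=(r,s)$ with $r\in\bbf_p$ and $s\notin\bbf_p$, so $s\neq 0$. Moreover $r+s+1\neq 0$, since $r+1\in\bbf_p$. Atypicality $\delta(\lambda)=rs(r+s+1)=0$ therefore forces $r=0$, so $\lambda=(0,s)$ with $s\notin\bbf_p$, exactly as in Proposition \ref{chi2}.

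\emph{Structure of $L^0_{\chi}(\lambda)$.} From \cite{XW17}, $\chi_4$ is a standard Levi form with $\chi(X_{-\epsilon_1+\epsilon_2})=1$. In this case $L^0_{\chi}(\lambda)\cong U_{\chi}(\mathfrak u)\otimes\K_\lambda$ is $9$-dimensional with basis $X_{-\epsilon_1+\epsilon_3}^cX_{-\epsilon_2+\epsilon_3}^bv$. The key difference from $\chi_2$ is that $X_{-\epsilon_1+\epsilon_2}^3$ now acts by $1$, not $0$. We therefore first record the relation expressing $X_{-\epsilon_1+\epsilon_2}v$, or the relevant product such as $X_{-\epsilon_2+\epsilon_3}^2X_{-\epsilon_1+\epsilon_2}v$, in this basis, analogous to (\ref{2.14}) and (\ref{2.16}). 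This relation feeds into the $\ggg_{\0}$-action formulas, in particular into $X_{\epsilon_2-\epsilon_3}$ applied to basis vectors. Weight arguments are no longer available, because $X_{-\epsilon_1+\epsilon_2}$ is not nilpotent. Instead we work with $\hhh$-generalized eigenvectors, using that $H_{\epsilon_1-\epsilon_2}$ acts by the eigenvalue of $\lambda$ shifted by $\mathfrak u$-degree.

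\emph{The vector $m_1$.} If $w_1\neq0$, it is $\nnn_0$-maximal, so $w_1=v$. We expand $w_2$ and $w_3$ in the basis and impose (\ref{2.4}) and (\ref{2.5}). In Proposition \ref{chi2} this produced the extra maximal vector with $w_2=\frac1sX_{-\epsilon_2+\epsilon_3}v$. Here we expect the corrected action, coming from $\chi(X_{-\epsilon_1+\epsilon_2})=1$, to make (\ref{2.4}) or (\ref{2.3}) inconsistent, forcing $w_1=0$. The cases $w_1=0,w_2=v$ and $w_1=w_2=0,w_3=v$ are then excluded as in Proposition \ref{chi6}: the first via (\ref{2.4}) and (\ref{2.5}), the second via (\ref{2.3}). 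For $m_2$ and $m_3$ we apply the same procedure with (\ref{2.6})--(\ref{2.12}). For $m_3$, (\ref{2.12}) would require $w$ to be killed by $X_{-\epsilon_1+\epsilon_3}$ and $X_{-\epsilon_2+\epsilon_3}$, which is impossible in $U_{\chi}(\mathfrak u)\otimes\K_\lambda$ unless $w=0$.

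\emph{Main obstacle.} The hardest step is showing that the $m_1$ candidate with $w_1=v$ which survives for $\chi_2$ does not survive here. This requires an explicit computation of $X_{\epsilon_1-\epsilon_2}$ and $X_{\epsilon_2-\epsilon_3}$ on the nine basis vectors, including the $\chi$-twist. We would first try to exhibit a nonzero term, arising from $X_{-\epsilon_1+\epsilon_2}^3=1$, in $X_{\epsilon_1-\epsilon_2}w_3-w_2$ that cannot be cancelled.

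If this fails, the fallback is a dimension count. We show that $K_{\chi}(\lambda)$ is generated by $m_1$ together with the image of $\wedge(\ggg_{-1})$, and compare with the $p$-dimension bound $\dim L_{\chi}(\lambda)\ge 2^3\cdot 9$ forced by the orbit of $\chi_4$. Such a comparison would show that $K_{\chi}(\lambda)$ is already irreducible.
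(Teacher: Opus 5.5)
Your reduction to $\lambda=(0,s)$ with $s\notin\bbf_p$ is correct. The rest has a genuine gap, and the strategy cannot succeed as formulated.

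\textbf{Your model of $L^0_\chi(\lambda)$ is wrong.} For $\chi_4$ no module of the form $U_\chi(\mathfrak u)\otimes\K_\lambda$ exists. The line $\K v$ would have to be stable under $X_{-\epsilon_1+\epsilon_2}$, which shifts weights and so would act by $0$ on it. But its cube acts by $\chi(X_{-\epsilon_1+\epsilon_2})^3=1$. In fact $L^0_\chi(\lambda)=Z^0_\chi(\lambda)$ is $27$-dimensional. Since $\chi(H_{\epsilon_2-\epsilon_3})\neq 0$, the vectors killed by $X_{\epsilon_2-\epsilon_3}$ and $X_{\epsilon_1-\epsilon_3}$ lie in the span of the $X^a_{-\epsilon_1+\epsilon_2}v$, on which $X_{-\epsilon_1+\epsilon_2}$ acts invertibly. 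So there is no relation writing $X_{-\epsilon_1+\epsilon_2}v$ in terms of the $X^c_{-\epsilon_1+\epsilon_3}X^b_{-\epsilon_2+\epsilon_3}v$. Incidentally, weight arguments are still available, since $\hhh$ acts semisimply.

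\textbf{Your target statement is false.} You aim to show that $v$ is the only maximal vector up to scalars. Since $\lambda(H_{\epsilon_1-\epsilon_2})=0$, the vector $X_{-\epsilon_1+\epsilon_2}v\in 1\otimes L^0_\chi(\lambda)$ is killed by $X_{\epsilon_1-\epsilon_2}$, $X_{\epsilon_2-\epsilon_3}$ and $\ggg_{+1}$. It is therefore a second, independent maximal vector, of weight $(1,s+1)$, even though $K_\chi(\lambda)$ turns out to be simple. When $\chi(\nnn_0^-)\neq 0$, a maximal vector need not generate a proper submodule. So counting maximal vectors does not count composition factors, and your normalization ``$w_1\neq 0\Rightarrow w_1=v$'' is unjustified.

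\textbf{The remaining steps do not close the argument.}
\begin{itemize}
\item[(a)] The decisive step for $m_1$ is only stated as an expectation, not carried out.
\item[(b)] The argument for $m_3$ fails even in your own model: $X^2_{-\epsilon_1+\epsilon_3}X^2_{-\epsilon_2+\epsilon_3}v$ is annihilated by both $X_{-\epsilon_1+\epsilon_3}$ and $X_{-\epsilon_2+\epsilon_3}$.
\item[(c)] The fallback bound $\dim L_\chi(\lambda)\geq 2^3\cdot 9$ is unproved. With $\dim K_\chi(\lambda)=8\cdot 27$, it would not suffice anyway.
\end{itemize}

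\textbf{The missing idea is exactly what the extra maximal vector signals.} Because $\chi(\nnn_0)=0$ and $\chi(X_{-\epsilon_1+\epsilon_2})\neq 0$, one has $K_\chi(\lambda)\cong K_\chi(s_{\epsilon_1-\epsilon_2}\cdot\lambda)$, and $s_{\epsilon_1-\epsilon_2}\cdot(0,s)=(1,s+1)$. Then $\delta(1,s+1)=(s+1)(s+3)=s(s+1)\neq 0$ for $s\notin\bbf_p$. So this weight is typical, and Proposition \ref{delta} shows $K_\chi(\lambda)$ is irreducible. This is the paper's entire proof: once the atypical weight is moved by $s_{\epsilon_1-\epsilon_2}$ to a typical one, no maximal-vector computation is needed.
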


\begin{proof}
Since $\lambda=(r,s)\in\Lambda_{\chi}$ and $\chi(H_{\epsilon_2-\epsilon_3})=1$, $r\in\bbf_p, s\notin\bbf_p$.
Since $\lambda$ is atypical, $\delta(\lambda)=rs(r+s+1)$, we must have $r=0$. As $\chi(X_{-\epsilon_1+\epsilon_2})=1$, $K_{\chi}(\lambda)\cong K_{\chi}(s_{\epsilon_1-\epsilon_2}\cdot\lambda)$. As $s_{\epsilon_1-\epsilon_2}\cdot(0,s)=(1,s+1)$, $\delta(s_{\epsilon_1-\epsilon_2}\cdot\lambda)\neq 0$.
So $K_{\chi}(\lambda)$ is irreducible.
\end{proof}

Let $\chi=0$ and $\lambda\in\Lambda_{\chi}$ be atypical. We list the basis of irreducible module $L_{\chi}^0(\lambda)$ and some equations required for the computation (cf. \cite[Theorem 3]{XW17}).
\begin{itemize}
\item[(I.1)]  If $\lambda=(0,0)$, then dim$L_{\chi}^0(\lambda)$=1,
$L_{\chi}^0(\lambda)=Span_{\K}\{v\}.$

\item[(I.2)]  If $\lambda=(1,1)$, then dim$L_{\chi}^0(\lambda)$=7,
$$L_{\chi}^0(\lambda)=Span_{\K}\{v,X_1v,X_2X_1v,X_2^2X_1v,X_3X_1v,X_3X_2X_1v,X_2v\},$$
and it satisfies $X_2X_1v=-X_3v$.

\item[(II.1)]  If $\lambda=(1,0)$, then dim$L_{\chi}^0(\lambda)$=3,
 $$L_{\chi}^0(\lambda)=Span_{\K}\{v,X_1v,X_2X_1v\},$$
 and it satisfies $X_2X_1v=X_3v$.

\item[(II.2)]  If $\lambda=(0,2)$, then dim$L_{\chi}^0(\lambda)$=6,
 $$L_{\chi}^0(\lambda)=Span_{\K}\{v,X_3v, X_2v,X_3^2v,X_2^2v,X_3X_2v\}.$$.

\item[(III.1)]  If $\lambda=(0,1)$, then dim$L_{\chi}^0(\lambda)$=3,
 $$L_{\chi}^0(\lambda)=Span_{\K}\{v,X_2v,X_3v\}.$$

\item[(III.2)]  If $\lambda=(2,0)$, then dim$L_{\chi}^0(\lambda)$=6,
 $$L_{\chi}^0(\lambda)=Span_{\K}\{v, X_3v, X_1v, X_3X_1v,X_3X_2X_1v, X_1^2v\},$$
  and it satisfies $X_3v=-X_2X_1v, X_2v=0$.
\end{itemize}

\begin{prop}
Let $\chi=0$ and $\lambda\in\Lambda_{\chi}$. Suppose $\lambda$ is atypical. Then the composition factors of $K_{\chi}(\lambda)$ are given below:
\begin{itemize}
\item[(I.1)]  If $\lambda=(0,0)$, then
$$[K_{\chi}(\lambda)]=[L_{\chi}(\lambda)].$$

\item[(I.2)]  If $\lambda=(1,1)$, then
$$[K_{\chi}(\lambda)]=[L_{\chi}(\lambda)]+[L_{\chi}(\lambda-\epsilon_1-\epsilon_3)].$$

\item[(II.1)] If $\lambda=(1,0)$, then
$$[K_{\chi}(\lambda)]=[L_{\chi}(\lambda)]+[L_{\chi}(\lambda-\epsilon_2-\epsilon_3)].$$

\item[(II.2)]  If $\lambda=(0,2)$, then
$$[K_{\chi}(\lambda)]=[L_{\chi}(\lambda)]+[L_{\chi}(\lambda-\epsilon_1-\epsilon_2)].$$

\item[(III.1)]  If $\lambda=(0,1)$, then
$$[K_{\chi}(\lambda)]=[L_{\chi}(\lambda)]+[L_{\chi}(\lambda-\epsilon_1-\epsilon_2)]+[L_{\chi}(\lambda-\epsilon_1-2\epsilon_2-\epsilon_3)].$$

\item[(III.2)]  If $\lambda=(2,0)$, then
$$[K_{\chi}(\lambda)]=[L_{\chi}(\lambda)]+[L_{\chi}(\lambda-\epsilon_2-\epsilon_3)].$$
\end{itemize}

\end{prop}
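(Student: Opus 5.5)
I would proceed exactly as in the proofs of Propositions \ref{chi2}, \ref{chi6} and \ref{chi5.3}: for each of the six atypical weights (I.1)--(III.2), determine all homogeneous maximal vectors of $K_{\chi}(\lambda)$ of the forms $m_1,m_2,m_3$. By \cite[Proposition 2.2]{R26} it suffices to treat homogeneous vectors. Using the explicit bases of $L^0_{\chi}(\lambda)$ listed above (with $X_1=X_{-\epsilon_1+\epsilon_2}$, $X_2=X_{-\epsilon_2+\epsilon_3}$, $X_3=X_{-\epsilon_1+\epsilon_3}$), I would solve the linear systems (\ref{2.1})--(\ref{2.5}), (\ref{2.6})--(\ref{2.10}) and (\ref{2.11})--(\ref{2.12}) in each case. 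Each nonzero maximal vector of weight $\mu$ yields a homomorphism from a suitable induced module. I would then use the dimension count $\dim K_{\chi}(\lambda)=8\dim L^0_{\chi}(\lambda)$ to verify that the listed factors exhaust the composition series.

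The case analysis follows the pattern of the earlier proofs. First, (\ref{2.4})--(\ref{2.5}) force $w_1$ to be $\nnn_0$-maximal, so $w_1\in\K v$. If $w_1=v$, the weights in (\ref{2.1})--(\ref{2.2}) pin down $w_2$ and $w_3$ as combinations of the few basis vectors of the right weight. Equations (\ref{2.4}), (\ref{2.5}) and (\ref{2.3}) then either determine them uniquely or give a contradiction. The cases $w_1=0,w_2=v$ and $w_1=w_2=0,w_3=v$ are handled the same way; the last always contradicts (\ref{2.3}) as before. The same procedure applies to $m_2$, with $w_3$ as the distinguished component in (\ref{2.8}), and to $m_3$, where (\ref{2.12}) requires $w$ to be killed by $X_2,X_3$ and by $\nnn_0$.

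The expected outcomes are as follows:
\begin{itemize}
\item (I.1): $L^0$ is one-dimensional, so $w_i\in\K v$ with prescribed distinct weights. Almost everything is killed, and I expect no maximal vector besides $v$.
\item (I.2) and (II.1): a single additional maximal vector, of type $m_1$ with $w_1=0$, $w_2=v$, of weight $\lambda-\epsilon_1-\epsilon_3$ (respectively of type $m_1$ with $w_3$ leading, of weight $\lambda-\epsilon_2-\epsilon_3$).
\item (II.2): one extra vector, of type $m_1$ with $w_1=v$, of weight $\lambda-\epsilon_1-\epsilon_2$.
\item (III.2): one extra vector, of weight $\lambda-\epsilon_2-\epsilon_3$.
\item (III.1): two extra vectors, one $m_1$ of weight $\lambda-\epsilon_1-\epsilon_2$ and one $m_2$ or $m_3$ of weight $\lambda-\epsilon_1-2\epsilon_2-\epsilon_3$.
\end{itemize}
The relations $X_2X_1v=\pm X_3v$ and $X_2v=0$ from the list are used exactly as (\ref{2.14}) and (\ref{2.16}) were.

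The main obstacle is turning ``the maximal vectors are exactly these'' into a statement about composition factors. With $\chi=0$ the modules are not simple $\nnn^-$-cyclic in the way the regular nilpotent case is, and a composition factor need not produce a maximal vector in $K_{\chi}(\lambda)$ itself; it may only appear in a subquotient. To close the gap, I would compute $\dim L_{\chi}(\mu)$ for each weight $\mu$ appearing, as $\dim K_{\chi}(\mu)$ minus the dimensions of the submodules generated by the maximal vectors found. I would then check that the dimensions of the claimed factors sum to $8\dim L^0_{\chi}(\lambda)$. If the sum falls short, I would look for maximal vectors in the quotient $K_{\chi}(\lambda)/\langle v'\rangle$ by the same linear-algebra method.

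I expect (III.1) to be the hardest case. The factor of weight $\lambda-\epsilon_1-2\epsilon_2-\epsilon_3$ sits in degree $2$ or $3$ of $\wedge(\ggg_{-1})$, so it may only become visible after quotienting out the submodule generated by the degree-one maximal vector. The dimension bookkeeping there ($24=\dim L(\lambda)+\dim L(\lambda-\epsilon_1-\epsilon_2)+\dim L(\lambda-\epsilon_1-2\epsilon_2-\epsilon_3)$) is the check I would carry out most carefully.
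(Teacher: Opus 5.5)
\textbf{The proposal cannot be completed as a proof of the displayed statement, because two of its lines are false.} Your method is the paper's: you solve (\ref{2.1})--(\ref{2.12}) on the explicit bases of $L^0_\chi(\lambda)$. The extra maximal vectors you anticipate in (I.2), (II.1), (II.2), (III.1) and (III.2) agree with the ones the paper lists. The first place the plan breaks is (I.1).

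\textbf{Case (I.1).} Your expectation that $v$ is the only maximal vector is wrong. So is your claim that the case $w_1=w_2=0$, $w_3=v$ always contradicts (\ref{2.3}). That contradiction needs $X_{-\epsilon_2+\epsilon_3}v\neq 0$, which fails in (I.1), (II.1) and (III.2); your own prediction for (II.1) relies on exactly this case. For $\lambda=(0,0)$ the module $L^0_\chi(\lambda)=\K v$ is trivial. Hence $x\cdot(yv)=[x,y]v=0$ for all $x\in\ggg_{+1}$, $y\in\ggg_{-1}$. This has two consequences:
\begin{itemize}
\item $X_{-\epsilon_2-\epsilon_3}v$ is a maximal vector of weight $(1,0)$.
\item $\wedge^{\geq 1}(\ggg_{-1})\otimes v$ is a $7$-dimensional submodule of the $8$-dimensional $K_\chi(\lambda)$.
\end{itemize}
In addition, $w=v$ satisfies (\ref{2.11})--(\ref{2.12}), so $X_{-\epsilon_1-\epsilon_3}X_{-\epsilon_2-\epsilon_3}X_{-\epsilon_1-\epsilon_2}v$ is also maximal. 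These are exactly the two vectors the paper's proof lists for (I.1). Theorem \ref{chara}(4) correspondingly records $[K_\chi(0)]=[L_\chi(0)]+[L_\chi(-\epsilon_2-\epsilon_3)]+[L_\chi(-2\epsilon_1-2\epsilon_2-2\epsilon_3)]$. Line (I.1) of the statement is therefore a misprint and is false; your dimension check would expose it immediately, since $\dim L_\chi(0)=1\neq 8$.

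\textbf{Case (III.2).} Your point that a composition factor need not appear as a maximal vector of $K_\chi(\lambda)$ itself is well taken. The paper's proof only lists maximal vectors and never addresses it. Carrying out your check shows that (III.2) also fails as stated. Write $K_0(r,s)$ and $L_0(r,s)$ for $K_\chi(\lambda)$ and $L_\chi(\lambda)$ with $\chi=0$, $\lambda=(r,s)$.

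In $K_0(2,0)$ the maximal vector $m=X_{-\epsilon_2-\epsilon_3}v$ has weight $(0,0)$. The submodule $N$ it generates therefore has simple head $L_0(0,0)$, which is one-dimensional by the (I.1) computation. But $N$ contains the linearly independent vectors
\[
m,\quad X_{-\epsilon_1-\epsilon_2}m,\quad X_{-\epsilon_1-\epsilon_3}m,\quad X_{-\epsilon_1-\epsilon_2}X_{-\epsilon_1-\epsilon_3}m,
\]
which are independent because $K_\chi(\lambda)$ is free over $\wedge(\ggg_{-1})$. So $N$ has a further composition factor, and $K_0(2,0)$ has composition length at least $3$.

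The same contradiction appears numerically:
\begin{itemize}
\item The same vector in $K_0(1,0)$ makes $L_0(2,0)$ a composition factor of the $24$-dimensional $K_0(1,0)$, besides its head $L_0(1,0)$.
\item That head has dimension at least $3$, so $\dim L_0(2,0)\le 21$.
\item Statement (III.2) would force $\dim L_0(2,0)=48-1=47$.
\end{itemize}

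So your closing dimension count will refute the displayed list rather than confirm it. What your method can establish is a corrected statement, and that needs the subquotient analysis you sketch, not the list of maximal vectors alone.
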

\begin{proof}
The argument is parallel to those of Propositions \ref{chi5.1}, \ref{chi5.2} and \ref{chi5.3}. We only list the maximal vectors of $K_{\chi}(\lambda)$ other than $v$.
\begin{itemize}
\item[(I.1)]
\begin{quote}
  $m_1=X_{-\epsilon_2-\epsilon_3}v,$\\
$m_3=X_{-\epsilon_1-\epsilon_3}X_{-\epsilon_2-\epsilon_1}X_{-\epsilon_2-\epsilon_3}v$.
\end{quote}

\item[(I.2)]
\begin{quote}
  $m_1=X_{-\epsilon_1-\epsilon_3}v+X_{-\epsilon_2-\epsilon_3}X_{-\epsilon_1+\epsilon_2}v$.
\end{quote}
\item[(II.1)]
\begin{quote}
  $m_1=X_{-\epsilon_2-\epsilon_3}v$.
\end{quote}
\item[(II.2)]
\begin{quote}
  $m_1=X_{-\epsilon_1-\epsilon_2}v-X_{-\epsilon_1-\epsilon_3}X_{-\epsilon_2+\epsilon_3}v+X_{-\epsilon_2-\epsilon_3}X_{-\epsilon_1+\epsilon_3}v$.
\end{quote}

\item[(III.1)]
\begin{quote}
  $m_1=X_{-\epsilon_1-\epsilon_2}v+X_{-\epsilon_1-\epsilon_3}X_{-\epsilon_2+\epsilon_3}v-X_{-\epsilon_2-\epsilon_3}X_{-\epsilon_1+\epsilon_3}v$,\\
$m_2=X_{-\epsilon_2-\epsilon_3}X_{-\epsilon_1-\epsilon_2}v-X_{-\epsilon_1-\epsilon_3}X_{-\epsilon_2-\epsilon_3}X_{-\epsilon_2+\epsilon_3}v$.
\end{quote}

\item[(III.2)]
\begin{quote}
  $m_1=X_{-\epsilon_2-\epsilon_3}v$.
\end{quote}

\end{itemize}

\end{proof}

\subsection{}
Summarizing the arguments given above, we obtain the main theorem of this article:
\begin{theorem}\label{chara}
With notations as above, the multiplicities of simple modules in $K_{\chi}(\lambda)$ are given as follows:
\begin{itemize}
\item[(1)]  Suppose $\lambda\in\Lambda_{\chi}$ is typical. Then
$$[K_{\chi}(\lambda)]=[L_{\chi}(\lambda)].$$

\item[(2)]  Suppose $\chi=\chi_1,\chi_4$ or $\chi_6$. Then
$$[K_{\chi}(\lambda)]=[L_{\chi}(\lambda)].$$

\item[(3)]  Suppose $\chi=\chi_2$ and $\lambda=(0, s), s\neq 0$. Then
$$[K_{\chi}(\lambda)]=[L_{\chi}(\lambda)]+[L_{\chi}(\lambda-\epsilon_1-\epsilon_2)].$$

\item[(4)]  Suppose $\chi=\chi_3$ and $\lambda\in\Lambda_{\chi}$ is atypical. Then
\[\left\{
    \begin{array}{ll}
      {[K_{\chi}(0)]=[L_{\chi}(0)]+[L_{\chi}(-\epsilon_2-\epsilon_3)]+[L_{\chi}(-2\epsilon_1-2\epsilon_2-2\epsilon_3)]}, & \hbox{$\lambda=(0,0)$;}\\
      {[K_{\chi}(\lambda)]=[L_{\chi}(\lambda)]+[L_{\chi}(\lambda-\epsilon_1-\epsilon_3)]}, & \hbox{$\lambda=(1,1)$;} \\
      {[K_{\chi}(\lambda)]=[L_{\chi}(\lambda)]+[L_{\chi}(\lambda-\epsilon_2-\epsilon_3)]}, & \hbox{$\lambda=(1,0)$;} \\
       {[K_{\chi}(\lambda)]=[L_{\chi}(\lambda)]+[L_{\chi}(\lambda-\epsilon_1-\epsilon_2)]}, & \hbox{$\lambda=(0,2)$;} \\
      {[K_{\chi}(\lambda)]=[L_{\chi}(\lambda)]+[L_{\chi}(\lambda-\epsilon_1-\epsilon_2)]+[L_{\chi}(\lambda-\epsilon_1-2\epsilon_2-\epsilon_3)]}, & \hbox{$\lambda=(0,1)$;} \\
      {[K_{\chi}(\lambda)]=[L_{\chi}(\lambda)]+[L_{\chi}(\lambda-\epsilon_2-\epsilon_3)]}, & \hbox{$\lambda=(2,0)$.} \\
    \end{array}
  \right.
\]

\item[(5)]  Suppose $\chi=\chi_5$ and $\lambda\in\Lambda_{\chi}$ be atypical. Then
\[\left\{
    \begin{array}{ll}
      {[K_{\chi}(\lambda)]=[L_{\chi}(\lambda)]}, & \hbox{$\lambda=(0,0)$;} \\
       {[K_{\chi}(\lambda)]=[L_{\chi}(\lambda)]+[L_{\chi}(\lambda-\epsilon_1-\epsilon_2)]}, & \hbox{$\lambda=(0,1),(2,0)$;} \\
      {[K_{\chi}(\lambda)]=[L_{\chi}(\lambda)]+[L_{\chi}(\lambda-\epsilon_1-\epsilon_2)]+[L_{\chi}(\lambda-\epsilon_1-\epsilon_3)]}, & \hbox{$\lambda=(0,2)$.}
    \end{array}
  \right.
\]

\end{itemize}

\end{theorem}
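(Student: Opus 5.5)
The theorem is a summary of the case-by-case results above, so the plan is to assemble them. Throughout I use the reduction already in force: up to the $\GL(3)$-action we may take $\chi$ to be one of $\chi_1,\dots,\chi_6$, since $U_{\chi}(\ggg)\cong U_{\gamma\cdot\chi}(\ggg)$ for $\gamma\in Aut_p(\ggg_{\0})$. The weight $\lambda$ then runs over $\Lambda_{\chi}$, and we use $K_{\chi}(\lambda)\cong K_{\chi}(w\cdot\lambda)$ for $w$ in the relevant Weyl group or parabolic subgroup (by \cite[C.3]{J04}).

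\textbf{Item (1).} This is exactly Proposition \ref{delta}: if $\delta(\lambda)\neq0$, then $K_{\chi}(\lambda)$ is irreducible, so $[K_{\chi}(\lambda)]=[L_{\chi}(\lambda)]$.

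\textbf{Item (2).} I split by $\chi$.
\begin{itemize}
\item For $\chi_1$, use the cited result \cite[Proposition 2.7]{RB23}.
\item For $\chi_4$: typical weights are covered by (1), and atypical ones by the proposition for $\chi_4$, which uses $s_{\epsilon_1-\epsilon_2}\cdot(0,s)=(1,s+1)$ being typical.
\item For $\chi_6$: the orbits of $\lambda_2,\lambda_3,\lambda_4$ each contain a typical weight, and the orbit of $\lambda_1=(0,0)$ is handled by Proposition \ref{chi6}.
\end{itemize}

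\textbf{Items (3)--(5).} For $\chi_2$ with $\lambda=(0,s)$, apply Proposition \ref{chi2}; every other $\lambda$ for $\chi_2$ has $rs(r+s+1)\neq0$ because $s\notin\bbf_p$. For $\chi_5$, the six $W_I$-orbits reduce to $(0,0),(0,1),(0,2)$, all others containing a typical weight. These three are Propositions \ref{chi5.1}, \ref{chi5.2} and \ref{chi5.3}, and $(2,0)$ is covered through its orbit with $(0,1)$. For $\chi_3=0$, the six atypical weights are exactly those listed in the last proposition, and its items transfer directly.

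The mechanism in each case is the same. Each maximal vector of $K_{\chi}(\lambda)$, necessarily homogeneous in the $\wedge(\ggg_{-1})$-degree by \cite[Proposition 2.2]{R26}, generates a highest-weight subquotient, which yields a composition factor $L_{\chi}(\mu)$ with $\mu$ the weight of that vector. Conversely, every composition factor contains a maximal vector in a suitable subquotient.

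\textbf{Main obstacle.} The delicate point is the passage from "list of maximal vectors" to "full multiplicity list." One must check that the listed maximal vectors account for all composition factors with correct multiplicity. I would do this by a dimension count: compare $\dim K_{\chi}(\lambda)=8\dim L^0_{\chi}(\lambda)$ with the sum of the dimensions of the simple quotients generated by those vectors. Where this is tight, the weight bookkeeping also has to be verified, for example for $(0,0)$ at $\chi=0$. There the maximal vectors are $m_1=X_{-\epsilon_2-\epsilon_3}v$ of weight $-\epsilon_2-\epsilon_3$ and the degree-three vector of weight $-2\epsilon_1-2\epsilon_2-2\epsilon_3$, and these must match the factors listed in (4).

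I expect this verification, rather than the assembly itself, to be where the care is needed. In particular, one has to reconcile the case $\lambda=(0,0)$, $\chi=0$ in (4) with the corresponding item (I.1) of the previous proposition, which records only $[L_{\chi}(\lambda)]$.
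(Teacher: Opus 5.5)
\textbf{A genuine gap: the step you flag as the main obstacle cannot be closed.} Your assembly is the same as the paper's, whose proof of Theorem~\ref{chara} is literally ``summarizing the arguments given above.'' However, some of the formulas in items (4) and (5) are false, so no proof of the theorem as stated exists.

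\emph{Run your own dimension count for $\chi=\chi_3=0$.} By the paper's data, $\dim K_\chi(\lambda)=8\dim L^0_\chi(\lambda)$ equals $8,24,48$ for $\lambda=(0,0),(1,0),(2,0)$. Note that $-\epsilon_2-\epsilon_3=(1,0)$, that $\epsilon_1+\epsilon_2+\epsilon_3=0$ on $\hhh$, and that weights lie in $\bbf_3^2$. So item (4) lists the factors
\begin{itemize}
\item $L_\chi(0,0),\,L_\chi(1,0),\,L_\chi(0,0)$ for $\lambda=(0,0)$;
\item $L_\chi(1,0),\,L_\chi(2,0)$ for $\lambda=(1,0)$;
\item $L_\chi(2,0),\,L_\chi(0,0)$ for $\lambda=(2,0)$.
\end{itemize}
With $d_\mu=\dim L_\chi(\mu)$ this gives $8=2d_{(0,0)}+d_{(1,0)}$, $24=d_{(1,0)}+d_{(2,0)}$ and $48=d_{(2,0)}+d_{(0,0)}$. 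Hence $3d_{(0,0)}=32$, which is impossible. Using the one-factor version of (I.1) instead forces $d_{(1,0)}<0$.

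\emph{Item (5) fails the same test.} $K_{\chi_5}(0,0)$ and $K_{\chi_5}(0,1)$ both have dimension $72$. So the first cannot be simple and also occur as a factor of the second alongside $L_{\chi_5}(0,1)\neq 0$.

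\emph{The actual modules for $\chi=0$.} $L_\chi(1,0)$ is the natural module $\K^{3|3}$, of dimension $6$. $L_\chi(2,0)$ is the adjoint module $\p(3)$, of dimension $17$: $\p(3)$ is simple in characteristic $3$, and $X_{2\epsilon_1}$ is $\bbb$-maximal of weight $(2,0)$. Therefore $K_\chi(1,0)$ has a third composition factor, a trivial one, that none of the listed maximal vectors accounts for. Likewise $K_\chi(2,0)$, of dimension $48$, is far from $[L_\chi(2,0)]+[L_\chi(0,0)]$.

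\emph{Why the method fails.} This is exactly the failure you anticipated. A maximal vector of $K_\chi(\lambda)$ certifies a composition factor. If $v$ is the only maximal vector, that does prove simplicity, since every nonzero submodule contains a $\bbb$-maximal vector when $\chi(\nnn)=0$. But the highest weight vector of a factor may exist only in a subquotient and need not lift to $K_\chi(\lambda)$. So the lists in the propositions give only lower bounds. Your remedy also needs the dimensions $\dim L_\chi(\mu)$, which nothing in the paper supplies; once computed, they refute the statement rather than confirm it.

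\emph{Your reconciliation question.} At $\lambda=(0,0)$ the theorem's version is correct, and the displayed formula in (I.1) is a misprint; its own list $v,m_1,m_3$ already shows this. Indeed, $K_\chi(0)=\wedge(\ggg_{-1})\supset\wedge^{\ge1}\ggg_{-1}\supset\wedge^3\ggg_{-1}\supset 0$ is a composition series with factors of dimensions $1,6,1$:
\begin{itemize}
\item $\ggg_{+1}$ kills $\ggg_{-1}v$ because $\lambda=0$.
\item $\ggg_{+1}$ kills the top degree, because the relevant coefficients are traces of $\mathrm{ad}[x,y]$ on $\ggg_{-1}$, and this is a character of the perfect algebra $\ggg_{\0}$, hence zero.
\item $\ggg_{+1}$ maps $\wedge^2\ggg_{-1}$ onto $\ggg_{-1}$, so the middle layer is simple.
\end{itemize}
Thus $[K_\chi(0)]=2[L_\chi(0)]+[L_\chi(1,0)]$, and the factor $L_\chi(-2\epsilon_1-2\epsilon_2-2\epsilon_3)$ in (4) is $L_\chi(0)$ again.
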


\begin{remark}
The multiplicities of simple modules in $K_{\chi}(\lambda)$ over the field of characteristic $p=3$ given in Theorem \ref{chara} are the same as the multiplicities of simple modules in $K_{\chi}(\lambda)$ over the field of characteristic $p>3$ given in \cite[Theorem 2.11]{R26}
\end{remark}

\end{document}